\title{Another look at elliptic homogenization} 
\author{Andrea Braides, Giuseppe Cosma Brusca, and Davide Donati}
\address{SISSA, via Bonomea 265, Trieste, Italy}
\theoremstyle{plain}
\newtheorem{theorem}{Theorem}
\newtheorem{lemma}[theorem]{Lemma}
\theoremstyle{definition}
\newtheorem{remark}[theorem]{Remark}
\newcommand\e{\varepsilon}
\newcommand\R{\mathbb{R}}    
\newcommand\Rd{\mathbb{R}^d}
\newcommand\Om{\Omega}
\newcommand\be{\begin{equation}}
\newcommand\ee{\end{equation}}
\newcommand\Omid{\Delta_i^{2\varepsilon}}
\newcommand\Omi{\Delta_i^{\varepsilon}}
\newcommand\Hd[1]{\mathcal{H}^{d-1}(#1)}
\theoremstyle{remark}
\begin{document}

\maketitle
\begin{abstract} We consider the limit of sequences of normalized $(s,2)$-Gagliardo seminorms with an oscillating coefficient as $s\to 1$. In a seminal paper by Bourgain, Brezis and Mironescu (subsequently extended by Ponce) it is proven that if the coefficient is constant then this sequence $\Gamma$-converges to a multiple of the Dirichlet integral. Here we prove that, if we denote by $\e$ the scale of the oscillations and we assume that $1-s<\!<\e^2$, this sequence converges to the homogenized functional formally obtained by separating the effects of $s$ and $\e$; that is, by the homogenization as $\e\to 0$ of the Dirichlet integral with oscillating coefficient obtained by formally letting $s\to 1$ first.
\smallskip

{\bf MSC codes:} 49J45, 35B27, 35R11.

{\bf Keywords:} $\Gamma$-convergence, non-local functionals, fractional Sobolev spaces, homogenization.

\end{abstract}

\section{Introduction}
In their seminal paper \cite{bbm} Bourgain, Brezis and Mironescu have studied the asymptotic behaviour of Gagliardo seminorms $[u]_{W^{s,p}(\Omega)}$ as $s\to 1$, and in particular, in the case $p=2$, of the seminorm $[u]_{W^{s,2}(\Omega)}$ given by
\[
[u]_{W^{s,2}(\Omega)}:=\Bigl(\iint_{\Omega\times\Omega}\frac{|u(x)-u(y)|^2}{|x-y|^{d+2s}}\,dxdy\Bigr)^{\frac{1}{2}},
\]
where $\Omega$ a bounded open subset of $\R^d$, $s\in (0,1)$, and the fractional Sobolev space $W^{s,2}(\Omega)$ is defined as
$$
W^{s,2}(\Omega):=\{u\in L^2(\Omega): [u]_{W^{s,2}(\Omega)} < +\infty\}.
$$
Their results, subsequently extended by Ponce \cite{Ponce}, imply that the functionals 
$$
F_s(u)=(1-s)\iint_{\Omega\times\Omega}\frac{|u(x)-u(y)|^2}{|x-y|^{d+2s}}\,dxdy
$$
$\Gamma$-converge, with respect to the $L^2$-convergence, to the (multiple of the) Dirichlet integral
$$
\frac{\sigma_{d-1}}{2d}\int_\Omega |\nabla u|^2\,dx,
$$
where $\sigma_{d-1}$ is the ${\mathcal H}^{d-1}$-dimensional measure of $S^{d-1}$,
with domain the usual Sobolev space $W^{1,2}(\Omega)$. Since the functionals are equi-coercive in $L^2(\Omega)$, by the properties of convergence of minima of $\Gamma$-convergence, minimum problems involving the Dirichlet integral can be approximated by problems involving Gagliardo seminorms, upon possibly suitably defining boundary-value problems if necessary. 

This fractional-Sobolev space approximation can be extended to other problems involving elliptic integrals, such as homogenization problems, the most classical of which is the asymptotic analysis of (isotropic) oscillating energies
\begin{equation}\label{oscillating}
\int_\Omega a\Bigl(\frac{x}{\e}\Bigr)|\nabla u|^2\,dx,
\end{equation}
where $a$ is a $1$-periodic function with $0<\alpha\le a(y)\le \beta<+\infty$. The $\Gamma$-limit as $\e\to 0$ of such energies is the {\em homogenized} functional
$$
F_{\rm hom}(u)=\int_\Omega\langle A_{\rm hom}\nabla u,\nabla u\rangle\,dx,
$$
where $A_{\rm hom}$ is a symmetric $d\times d$ matrix characterized by the {\em homogenization formula}
\begin{equation}\label{homfor}
\langle A_{\rm hom}z,z\rangle=\min\Bigl\{\int_{(0,1)^d} a(y)|z+\nabla \varphi(y)|^2dy: \varphi\  1\hbox{-periodic}\Bigr\}
\end{equation}
(see e.g.~\cite{BDF}).
Formally, by the result of Bourgain, Brezis and Mironescu, we can define an approximation of the homogenized functional using the energies
\begin{equation}\label{functional-0}
    F_{\varepsilon,s}(u)=(1-s)\iint_{\Omega\times \Omega} a\left(\frac{x}{\varepsilon}\right)\frac{\lvert u(x)-u(y)\rvert^2}{\lvert x-y\rvert^{d+2s}}dxdy,
\end{equation}
defined for $u\in W^{s,2}(\Omega)$ and $\e>0$. Indeed, upon supposing for simplicity that $a$ be continuous, we easily see that for fixed $\e>0$ the $\Gamma$-limit of $F_{\varepsilon,s}$ as $s\to 1^-$ is indeed 
$$
\frac{\sigma_{d-1}}{2d}\int_\Omega a\Bigl(\frac{x}{\e}\Bigr) |\nabla u|^2\,dx,
$$
which then $\Gamma$-converge to $\frac{\sigma_{d-1}}{2d} F_{\rm hom}$ as $\e\to0$.
Since all functionals are equi-coercive in $L^2$, we can use a diagonal argument (see \cite{DalMaso}) and deduce that there exist $s=s_\e$ such that $s\to 1^-$ as $\e\to0$ and the $\Gamma$-limit of $F_{\e,s}$ is still $\frac{\sigma_{d-1}}{2d} F_{\rm hom}$. 

In this paper we investigate the scales $s=s_\e$ at which this limit holds.
If we let both $\e,$ and $1-s$ simultaneously tend to $0$, a heuristic argument in order to argue what can be a critical scale is as follows. 
Let $z$ be fixed and let $\varphi$ be as in the  formula characterizing $\langle A_{\rm hom},z,z\rangle$.
We can use $u_\e(x)= \langle z,x\rangle+\e\varphi(x/\e)$ as test functions for the $\Gamma$-limit of $F_{\e,s}$.
We may suppose that $\varphi$ be twice continuous differentiable, so that, using the Taylor development of $\varphi$ at $x/\e$, we have
\begin{eqnarray*}
&&\hskip-2cm (1-s)\iint_{\Omega\times\Omega} a\Bigl(\frac{x}{\e}\Bigr)\frac{|u_\e(x)-u_\e(y)|^2}{|x-y|^{d+2s}}\,dxdy
\\
&=&(1-s)\iint_{\Omega\times\Omega} a\Bigl(\frac{x}{\e}\Bigr)\frac{|\langle z+\nabla\varphi(\frac{x}{\e}),x-y\rangle|^2+ O(\frac{1}{\e^2})}{|x-y|^{d+2s}}\,dxdy 
\\
&=& \frac{\sigma_{d-1}}{2d}\int_\Omega a\Bigl(\frac{x}{\e}\Bigr) \Big|z+\nabla\varphi\left(\frac{x}{\e}\right)\Big|^2dx + O\Big(\frac{1-s}{\e^2}\Big).
\end{eqnarray*}
This argument suggests that when
\begin{equation}\label{eq:isthemagicnumber}
1-s<\!< \e^2\qquad\qquad\hbox{({\em subcritical case})},
\end{equation}
 we may construct recovery sequences optimizing the oscillations of $a$, and actually is a proof of the upper bound in this case for the target function $u(x)=\langle z,x\rangle$. We will prove that under assumption \eqref{eq:isthemagicnumber} a {\em separation of the scales} $\e$ and $s$ occurs, and the limit is the one computed above by letting $s\to 1^-$ first and then $\e\to0$. The main argument in the proof of the lower bound is obtained by a discretization procedure (which as a byproduct also gives a different proof of the results in \cite{Ponce}) based on the use of Kuhn's decomposition \cite{Kuhn1960} and an integration argument on the set of orthogonal bases in $\mathbb R^d$ borrowed from a recent paper by Solci \cite{solci2023}.
 Thanks to \eqref{eq:isthemagicnumber}, we can then reduce the computation to studying the limit of oscillating energies \eqref{oscillating} on piecewise-affine interpolations at a scale much smaller than $\e$, for which the known homogenization result can be applied. This reduction is made possible by  a lemma (Lemma \ref{lemma:locality}), which allows to consider only points $x,y$ sufficiently close to each other in the computation of \eqref{functional-0}.

The cases other than subcritical are not dealt with here. They will require different, more complex, techniques and will be treated in future work. In order to give a hint of the necessity of different types of arguments in those cases, we can consider a similar type of non-local functionals treated in a general setting in 
 \cite{AABPT}, of the form 
\begin{equation}\label{functional-1}
    F_{\varepsilon,\delta}(u)=\frac{1}{\delta^d}\iint_{\Omega\times \Omega} \varrho\Big(\frac{|x-y|}{\delta}\Bigr)a\left(\frac{x}{\varepsilon}\right)\frac{\lvert u(x)-u(y)\rvert^2}{\lvert x-y\rvert^{2}}dxdy,
\end{equation}
where $\varrho$ is a suitably integrable positive kernel
and $\delta>0$ plays a similar role as $s$ above, in that it forces concentration as $\delta\to0$.
Such functionals are also studied in \cite{bbm} if the function $a$ is a constant, and still approximate the Dirichlet integrand as $\delta\to0$. The critical case for such functionals is $\delta\sim\e$, and if $\delta/\e\to \kappa\in(0,+\infty)$ the $\Gamma$-limit can be computed using a non-local-to-local homogenization procedure giving a local homogenized limit energy of the form
$$
F^\kappa_{\rm hom}(u)=\int_\Omega\langle A^\kappa_{\rm hom}\nabla u,\nabla u\rangle\,dx,
$$
where $A^\kappa_{\rm hom}$ is now characterized by a {\em non-local homogenization formula}
\begin {eqnarray*}
&&\hskip-.5cm\langle A^\kappa_{\rm hom}z,z\rangle\\
&&=\min\Bigl\{\int_{\mathbb R^d\times (0,1)^d} \frac{1}{\kappa^d}\varrho\Bigl(\frac{|x-y|}{\kappa}\Bigr) a(y)\frac{|\langle z,y-x\rangle+\varphi(y)-\varphi(x)|^2}{|x-y|^2}dxdy: \varphi\  1\hbox{-periodic}\Bigr\}.
\end{eqnarray*}
In order to prove this result, localization techniques for limits of non-local functionals must be used (see \cite{AABPT} and also \cite{BP}). Comparing  functionals $F_{\e,s}$ with those in \eqref{functional-1}, a main additional difficulty is due to the different nature of the dependence on $\delta$ and $s$, respectively, so that the case $1-s\sim \e^2$ cannot be directly set as a homogenization-concentration problem, contrary to the case $\delta\sim\e$.

\section{Statement of the result and preliminaries}
Let $a$ be a $1$-periodic continuous function such that 
\begin{equation}\label{eq:bounds}
0<\alpha \leq a(x) \leq \beta<+\infty
\end{equation}
for every $x\in\R^d$. For $\e,s\in(0,1)$ we introduce the functional 
\begin{equation}\label{functional}
    F_{\varepsilon,s}(u)=(1-s)\iint_{\Omega\times \Omega} a\left(\frac{x}{\varepsilon}\right)\frac{\lvert u(x)-u(y)\rvert^2}{\lvert x-y\rvert^{d+2s}}dxdy
\end{equation}
 for $u\in W^{s,2}(\Omega)$.

We assume that the $\e$ is a positive parameter and that $s$ is a function of $\e$ valued in $(0,1)$ approaching $1$ as $\e\to0$. Since $s=s(\e)$, as a shorthand, $F_{\e, s}$ will be denoted by $F_\e$.
We prove that if $1-s \ll\e^2$, then the separation of scales described in the Introduction holds, as if we were passing to the limit first letting $s\to1$ and then letting $\e\to 0$. 

\begin{theorem}
Let $\Omega$ be a bounded open set with Lipschitz boundary, $\e\in(0,1), \, s=s_\e$, and let $F_{\varepsilon}$ be the functional defined in \eqref{functional}. If \begin{equation}\label{eq:1-se}
\lim_{\varepsilon \to 0}\frac{1-s}{\varepsilon^2}=0,
\end{equation}
then  
    \begin{equation}\label{mainthm}
    \Gamma\hbox{-}\lim_{\e\to0} F_{\varepsilon}(u) = \frac{\sigma_{d-1}}{2d}\int_\Omega \langle A_{\rm hom}\nabla u,\nabla u\rangle\,dx=:F_{\rm hom}(u)
    \end{equation} 
for every $u\in W^{1,2}(\Omega)$, where the $\Gamma$-limit is computed with respect to the $L^2(\Omega)$ convergence, $A_{\rm hom}$ is given by \eqref{homfor}, and $\sigma_{d-1}:={\mathcal H}^{d-1}(S^{d-1})$ is the $(d-1)$-dimensional Hausdorff measure of the unit sphere in $\mathbb{R}^d$.
\end{theorem}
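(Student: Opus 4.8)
\smallskip

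The plan is to establish the $\Gamma$-convergence in \eqref{mainthm} by proving the two standard inequalities separately, combining the known homogenization result for the local functionals \eqref{oscillating} with the asymptotics of the fractional seminorms as $s\to1$, crucially exploiting the scale separation \eqref{eq:1-se}.

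\smallskip

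For the \emph{upper bound} (the $\Gamma$-limsup inequality), I would first treat the case of a linear target $u(x)=\langle z,x\rangle$, using as recovery sequences the oscillating test functions $u_\e(x)=\langle z,x\rangle+\e\varphi_\e(x/\e)$, where $\varphi_\e$ is a smooth $1$-periodic approximation of a (possibly only Sobolev) minimizer of the cell problem \eqref{homfor}. The heuristic computation in the Introduction — Taylor-expanding $\varphi$ so that the difference quotient $(u_\e(x)-u_\e(y))/|x-y|$ is close to $\langle z+\nabla\varphi(x/\e),\frac{x-y}{|x-y|}\rangle$ up to an error of order $|x-y|/\e^2$ — becomes rigorous once one controls the contribution of far-apart points $x,y$: that is precisely where I would invoke Lemma~\ref{lemma:locality} to restrict to $|x-y|\le r$ with $r$ small, so that the error term is $O((1-s)r^2/\e^2)$ and vanishes by \eqref{eq:1-se}. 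Integrating $\langle z+\nabla\varphi(x/\e),\nu\rangle^2$ over directions $\nu\in S^{d-1}$ produces the constant $\frac{\sigma_{d-1}}{2d}$, and then one lets $\e\to0$ in the resulting local oscillating integral $\frac{\sigma_{d-1}}{2d}\int_\Omega a(x/\e)|z+\nabla\varphi(x/\e)|^2\,dx$, which converges to $\frac{\sigma_{d-1}}{2d}\langle A_{\rm hom}z,z\rangle|\Omega|$ by the homogenization formula. The passage from linear targets to general $u\in W^{1,2}(\Omega)$ is then obtained by the usual density and relaxation argument: approximate $u$ by piecewise-affine functions on a fine triangulation, glue the local recovery sequences, and use the continuity of $a$ together with a diagonalization in the mesh size.

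\smallskip

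For the \emph{lower bound} (the $\Gamma$-liminf inequality), which I expect to be the main obstacle, the strategy outlined in the Introduction is a discretization procedure. Given a sequence $u_\e\to u$ in $L^2$ with $\sup_\e F_\e(u_\e)<+\infty$, I would freeze a small cube side $\eta$ with $\e\ll\eta\ll1$ — possible precisely because of the scale separation — and, using Kuhn's simplicial decomposition \cite{Kuhn1960} of each $\eta$-cube together with an averaging over the rotation group of orthonormal bases of $\R^d$ à la Solci \cite{solci2023}, bound $F_\e(u_\e)$ from below by the Dirichlet-type energy of the piecewise-affine interpolant $v_\e$ of $u_\e$ on the resulting triangulation, with the correct constant $\frac{\sigma_{d-1}}{2d}$ and with the weight $a(x/\e)$ essentially intact. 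Concretely: writing difference quotients along edges of the simplices, one recognizes $(1-s)|x-y|^{-d-2s}$ as an approximate identity concentrating on the diagonal (Lemma~\ref{lemma:locality} again confines the relevant $x,y$ to distance $\lesssim\eta$, within which $a(x/\e)$ varies but can be handled since we will anyway homogenize), and the integration over bases converts the one-dimensional slicing estimates into the full gradient. One is thereby reduced to a \emph{local} problem: $\liminf_\e \frac{\sigma_{d-1}}{2d}\int_\Omega a(x/\e)|\nabla v_\e|^2\,dx$ with $v_\e\to u$, to which the classical homogenization lower bound for \eqref{oscillating} applies, yielding $\ge\frac{\sigma_{d-1}}{2d}\int_\Omega\langle A_{\rm hom}\nabla u,\nabla u\rangle\,dx$ up to an error vanishing as $\eta\to0$. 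The delicate points are: ensuring the error terms in the Taylor/slicing estimates carry a factor $(1-s)/\e^2$ (or $(1-s)\eta^2/\e^2$) that dies by \eqref{eq:1-se}; that the piecewise-affine interpolants remain bounded in $W^{1,2}$ and converge to the same limit $u$; and that the weight $a(x/\e)$, evaluated at the varying point $x$ rather than at a node, does not spoil the homogenization — this last is addressed by the continuity and periodicity of $a$ and by choosing the triangulation mesh $\ll\e$.

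\smallskip

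Finally, equi-coercivity in $L^2(\Omega)$ — needed to make the $\Gamma$-limit meaningful and to identify the domain as $W^{1,2}(\Omega)$ — follows from the lower bound $F_\e\ge\alpha F_s^{\,0}$, where $F_s^{\,0}$ is the unweighted functional, together with the compactness part of the Bourgain–Brezis–Mironescu / Ponce theory; any sequence with bounded energy is precompact in $L^2$ and its limit lies in $W^{1,2}(\Omega)$, so the $\Gamma$-limit is $+\infty$ outside $W^{1,2}(\Omega)$ and equals $F_{\rm hom}$ on it.
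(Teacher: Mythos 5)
Your proposal follows essentially the same architecture as the paper: the locality lemma confines interactions to a small scale, a Kuhn-type simplicial discretization combined with an integration over orthonormal bases (à la Solci) converts the fractional energy into a Dirichlet-type energy of a piecewise-affine interpolant, classical homogenization then gives the lower bound, and the upper bound is linear targets plus gluing plus density. This is the right blueprint, and the key technical ideas are all present.

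There is, however, a real inconsistency in your description of the discretization scale in the lower bound. You first say you would ``freeze a small cube side $\eta$ with $\e\ll\eta\ll1$,'' and only at the end of the same paragraph do you say that to keep $a(x/\e)$ tame one should take ``the triangulation mesh $\ll\e$.'' These are contradictory, and only the second is correct. The mesh size $r$ must be squeezed between $\sqrt{1-s}$ and $\e$: the constraint $1-s\ll r^2$ is what makes the locality lemma applicable at scale $r$, and the constraint $r\ll\e$ is what lets you freeze $a$ at lattice nodes via the modulus of continuity, since the error carries a factor $\omega(r\sqrt{d}/\e)$. It is precisely the hypothesis $1-s\ll\e^2$ that makes this window non-empty (the paper takes $r=\sqrt{\e\sqrt{1-s}}$). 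If the mesh were instead $\gg\e$, the coefficient $a$ would oscillate across a cell and could not be frozen, and in any case the coarse interpolant would discard the oscillations of $u_\e$, making the bound useless. You should also make explicit the step — central to the paper's proof and only implicit in your sketch — where Jensen's inequality with respect to the probability measure on $(\rho,\bar\nu)$ turns the family of interpolants $u_\e^{\rho\bar\nu}$ into a single $W^{1,2}$ function $\bar u_\e$ to which the classical homogenization liminf is applied; without that averaging step, ``the'' piecewise-affine interpolant $v_\e$ you refer to is not uniquely defined, and the quadratic structure you need to extract a squared gradient is not yet in place.
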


We note that the hypothesis that $\Omega$ has a Lipschtiz boundary is exploited only in proving, by means of  a density argument, the $\Gamma$-$\limsup$ inequality.

\begin{remark}\rm Theorem 1 can be generalized to functionals modelled on $(s,p)$-Gagliardo seminorms with $p>1$, of the form 
\begin{equation}\label{functional-p}
    F_{\varepsilon,s}(u)=(1-s)\iint_{\Omega\times \Omega} a\left(\frac{x}{\varepsilon}\right)\frac{\lvert u(x)-u(y)\rvert^p}{\lvert x-y\rvert^{d+ps}}dxdy
\end{equation}
 for $u\in W^{s,p}(\Omega)$. The subcritical regime is then $1-s<\!<\e^p$, and the resulting $\Gamma$-limit is 
$$C_{d,p}\int_\Omega f_{\rm hom} (\nabla u)\,dx,$$
defined on $W^{1,p}(\Omega)$, with $C_{d,p}$ the constant appearing in the corresponding result in \cite{bbm}, and $f_{\rm hom}$ defined by 
\begin{equation}\label{homfor-p}
f_{\rm hom}(z)=\min\Bigl\{\int_{(0,1)^d} a(y)|z+\nabla \varphi(y)|^pdy: \varphi\  1\hbox{-periodic}\Bigr\}.
\end{equation}
This result can be obtained with few changes and some heavier notation from the case $p=2$. 
\end{remark}

We preliminarily state a lemma that allows us to take into account only those interactions due to pairs of sufficiently close points in $\Omega$.

\begin{lemma}\label{lemma:locality} Let $(u_\e)_\e\subseteq L^2(\Omega)$ be bounded and let $(r_\e)_\e$ be a sequence of positive real numbers such that
\be\label{eq:lemmare}
\lim_{\e\to0}\frac{1-s_\e}{r_\e^2}=0.
\ee
Then 
\[
\lim_{\e\to0} \biggl(F_{\e}(u_\e)-(1-s_\e) \iint_{\Omega\times \Omega \cap \{(x,y) : |x-y|\leq r_\e\}}a\left(\frac{x}{\e}\right)\frac{|u_\e(x)-u_\e(y)|^2}{|x-y|^{d+2s_\e}}\,dxdy\biggr)=0 .
\]
\end{lemma}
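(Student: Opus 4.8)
The plan is to split the double integral defining $F_\e(u_\e)$ into the near-diagonal part (pairs with $|x-y|\le r_\e$) and the far part (pairs with $|x-y|>r_\e$), and to show that the far part is $o(1)$ as $\e\to0$. Concretely, I would estimate
\[
(1-s_\e)\iint_{\Omega\times\Omega\cap\{|x-y|>r_\e\}}a\Bigl(\frac{x}{\e}\Bigr)\frac{|u_\e(x)-u_\e(y)|^2}{|x-y|^{d+2s_\e}}\,dxdy
\le \beta\,(1-s_\e)\iint_{\Omega\times\Omega\cap\{|x-y|>r_\e\}}\frac{|u_\e(x)-u_\e(y)|^2}{|x-y|^{d+2s_\e}}\,dxdy,
\]
using the upper bound $a\le\beta$ from \eqref{eq:bounds}. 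Then I would bound $|u_\e(x)-u_\e(y)|^2\le 2|u_\e(x)|^2+2|u_\e(y)|^2$ and use the symmetry of the domain of integration to reduce to controlling $(1-s_\e)\iint_{\Omega\times\Omega\cap\{|x-y|>r_\e\}}\frac{|u_\e(x)|^2}{|x-y|^{d+2s_\e}}\,dxdy$.

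The key computation is then the inner integral: for fixed $x\in\Omega$,
\[
\int_{\Omega\cap\{|x-y|>r_\e\}}\frac{dy}{|x-y|^{d+2s_\e}}\le\int_{\{y:\,|x-y|>r_\e\}}\frac{dy}{|x-y|^{d+2s_\e}}=\sigma_{d-1}\int_{r_\e}^{+\infty}\rho^{-1-2s_\e}\,d\rho=\frac{\sigma_{d-1}}{2s_\e}\,r_\e^{-2s_\e}.
\]
Multiplying by $1-s_\e$ and integrating in $x$ over $\Omega$ gives a bound of the form $C\,\dfrac{1-s_\e}{s_\e}\,r_\e^{-2s_\e}\normL{u_\e}{\Omega}^2$. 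Since $(u_\e)_\e$ is bounded in $L^2(\Omega)$, $s_\e\to1$, and $r_\e^{-2s_\e}=r_\e^{-2}\,r_\e^{2(1-s_\e)}$, the hypothesis \eqref{eq:lemmare} that $\dfrac{1-s_\e}{r_\e^2}\to0$ will force this quantity to $0$ provided the extra factor $r_\e^{2(1-s_\e)}$ stays bounded. I would handle that factor by a short case distinction: if $r_\e\ge1$ for infinitely many $\e$ one can instead integrate out only to, say, the diameter of $\Omega$ (the integrand $\frac{1}{|x-y|^{d+2s}}$ over $\{r_\e<|x-y|<\mathrm{diam}\,\Omega\}$ is at most $\sigma_{d-1}r_\e^{-d-2s_\e}\cdot$ (volume factor), again absorbed by \eqref{eq:lemmare} after noting the diameter is a fixed constant), while if $r_\e\le1$ then $r_\e^{2(1-s_\e)}\le1$ and the estimate above closes directly; in fact one may simply replace the outer radius of integration by $\max\{r_\e,\mathrm{diam}\,\Omega\}$ from the start, since $\Omega$ is bounded, which makes the bounded-factor issue disappear.

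Putting these together, the far-part contribution is bounded by $C\,\dfrac{1-s_\e}{s_\e\,r_\e^{2}}\normL{u_\e}{\Omega}^2$ up to a harmless multiplicative constant depending only on $d$, $\beta$, and $\Omega$, and this tends to $0$ by \eqref{eq:lemmare} and the boundedness of $(u_\e)_\e$ in $L^2$. Since the near-diagonal part is exactly the quantity subtracted in the statement, the difference $F_\e(u_\e)-(1-s_\e)\iint_{\Omega\times\Omega\cap\{|x-y|\le r_\e\}}a(x/\e)\frac{|u_\e(x)-u_\e(y)|^2}{|x-y|^{d+2s_\e}}\,dxdy$ equals the far part, hence tends to $0$. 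I do not anticipate a genuine obstacle here; the only mildly delicate point is the bookkeeping of the factor $r_\e^{2(1-s_\e)}$ when $r_\e$ is not known to be small, which is dispatched by truncating the radial integral at a fixed constant (the diameter of $\Omega$) as indicated above. Note that only the upper bound $a\le\beta$ is used, so the argument is insensitive to the oscillation in $a$.
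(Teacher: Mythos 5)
Your proposal follows essentially the same route as the paper: bound the far part using $a\le\beta$, convexity, and symmetry to reduce to $(1-s_\e)\int_\Omega|u_\e|^2\,dx\int_{|\xi|>r_\e}|\xi|^{-d-2s_\e}\,d\xi$, then compute the radial tail to obtain a bound of order $(1-s_\e)r_\e^{-2s_\e}/s_\e$. Your extra bookkeeping of the factor $r_\e^{2(1-s_\e)}$ is in fact slightly more careful than the paper, which passes from $r_\e^{-2s_\e}$ to $r_\e^{-2}$ by implicitly assuming $r_\e\le1$ (which holds in all of its applications).
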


\begin{proof}
 By the convexity of $x\mapsto x^2$ and by \eqref{eq:bounds}, we have
\begin{eqnarray} \label{inlemma} && \nonumber
\hskip-2cm(1-s_\e)\iint_{\Omega\times \Omega \cap \{(x,y) : |x-y| > r_\e\}}a\left(\frac{x}{\e}\right)\frac{|u_\e(x)-u_\e(y)|^2}{|x-y|^{d+2s_\e}}\,dxdy  \\ \nonumber
&\leq& (1-s_\e)\, 4 \beta \iint_{\Omega\times \Omega \cap \{(x,y) : |x-y| > r_\e\}}\frac{|u_\e(x)|^2}{|x-y|^{d+2s_\e}}\,dxdy    \\ 
&=& (1-s_\e)\, 4 \beta \int_{\R^d\setminus B_{r_\e}(0)}\frac{1}{|\xi|^{d+2s_\e}}\,d\xi\int_\Omega|u_\e(x)|^2\,dx,
\end{eqnarray}
where we also took advantage of the symmetry of the integrand to get the inequality.

Since $(u_\e)_\e$ is bounded in $L^2(\Omega)$, \eqref{inlemma} is estimated from above (up to a constant factor) by
\begin{eqnarray*}
    (1-s_\e)\int_{\R^d\setminus B_{r_\e}(0)}\frac{1}{|\xi|^{d+2s_\e}}\,d\xi& = &(1-s_\e)\mathcal{H}^{d-1}(S^{d-1})\int_{r_\e}^{\infty}\rho^{-1-2s_\e}d\rho\\
    &=& \mathcal{H}^{d-1}(S^{d-1})(1-s_\e)\frac{r_\e^{-2s_\e}}{2s_\e}\\
    &\leq& \mathcal{H}^{d-1}(S^{d-1})(1-s_\e)\frac{r_\e^{-2}}{2s_\e},
\end{eqnarray*}
which tends to $0$ by \eqref{eq:lemmare}, completing our proof.
\end{proof}

\begin{remark}\label{re:remark} We will apply the previous Lemma setting
\[
r_\e= \e \qquad \hbox{ or } \qquad r_\e=\sqrt{\e\sqrt{1-s_\e}}
\]
in accordance with our convenience.

Note that with the second choice it holds
$$\sqrt{1-s_\e}<\!<r_\e<\!<\e,$$
so that, in particular,
\begin{equation}\label{eq:ratio}
\lim_{\e\to0}\frac{r_\e}{\e}=0.
    \end{equation}
It is useful to observe that, in both circumstances, we have
\[
\lim_{\e\to0}|\log r_\e^{1-s_\e}| = \lim_{\e\to0}(1-s_\e)|\log r_\e|=0
\]
which implies
\be\label{eq:power}
\lim_{\e\to0} r_\e^{1-s_\e}=1.
\ee
\end{remark}
In the proof of the liminf inequality we take advantage of a discretization argument based on the construction of proper lattices, which can be parameterized on orthogonal bases.

\begin{remark}[Notation for the set of orthogonal bases]\label{notation}
Following the notation of \cite{solci2023} we define the set of orthonormal bases of $\mathbb{R}^d$
\[
V:=\{\overline{\nu}=(\nu_1,...,\nu_d) : \nu_j \in S^{d-1} \text{ such that } \langle\nu_i, \nu_j\rangle=0 \text{ for } i\neq j \}
\]
and observe that $V$ has Hausdorff dimension equal to $k_d:=d(d-1)/2$. 
For every $n\in\{1,...,d\}$ and fixed $\nu\in S^{d-1}$ we define
\[
V_n^{\nu}:= \{\overline{\nu}\in V \hbox{ such that } \nu_n=\nu\},
\]
whose Hausdorff dimension is $k_d-(d-1)$. Note that we have
\begin{equation} \label{hausdorffmeas}
\mathcal{H}^{k_d-(d-1)}(V_n^\nu)=\frac{\mathcal{H}^{k_d}(V)}{\mathcal{H}^{d-1}(S^{d-1})},
\end{equation}
and that in general, the formula
\begin{equation}\label{disintegration}
\int_{V}f(\overline{\nu})\, d\mathcal{H}^{k_d}(\overline{\nu}) = \int_{S^{d-1}} 
\int_{V_n^\nu} f^\nu(\overline{\nu}_n) \,d\mathcal{H}^{k_d-(d-1)} (\overline{\nu}_n)\,d\mathcal{H}^{d-1}(\nu),
\end{equation}
holds with $\overline{\nu}_n:=(\nu_1,...,\nu_{n-1}, \nu_{n+1},...,\nu_d)$ and $f^\nu(\overline{\nu}_n):= f(\overline{\nu})$ for every $n\in\{1,...,d\}$ and $f$ non negative measurable function.

\smallskip

Given $\rho>0$ and $\overline{\nu}\in V$, we define $\mathbb Z^d_{\rho\overline{\nu}}:=\{z_1\rho\nu_1+z_2\rho\nu_2+...+z_d\rho\nu_d : (z_1,...,z_d)\in \mathbb Z^d\}$ and $Q_{\rho\overline{\nu}}$ as the cube described by the orthogonal basis $\{\rho\nu_1,...,\rho\nu_d\}$.
\end{remark}

\begin{remark}[Kuhn's decomposition]\label{re:remark1}  
A cube $Q_{\rho\overline{\nu}}$ defined as in the previous remark can be further decomposed into $d!$ $d$-simplices of size $\rho^d/d!$ through {\em Kuhn's decomposition} in the following way: for every $\tau$ permutation of $d$ indices, we define $\Delta_{\rho\overline{\nu}}^{\tau}$ as the simplex described by the vertices $\rho\nu_{\tau(1)}, \rho\nu_{\tau(1)}+\rho\nu_{\tau(2)},..., \rho\nu_{\tau(1)}+\rho\nu_{\tau(2)}+...+\rho\nu_{\tau(d)}$. As shown in \cite[Lemma 1]{Kuhn1960}, as $\tau$ vary among all the permutations, we get a family of $d!$ simplices which constitutes the desired partition.

We denote the vertices of each simplex by 
\[
\Delta_{\rho\overline{\nu}}^{\tau,0}=0, \quad    \Delta_{\rho\overline{\nu}}^{\tau,j}:=\rho\nu_{\tau(1)}+\rho\nu_{\tau(2)}+...+\rho\nu_{\tau(j)}
\quad \hbox{ for } j=1,...,d
\]
and observe that, with this choice, it holds
\begin{equation} \label{vertices}
\Delta_{\rho\overline{\nu}}^{\tau, j}-\Delta_{\rho\overline{\nu}}^{\tau, j-1}=\rho\nu_{\tau(j)} \qquad \hbox{ for } j=1,...,d.
\end{equation}
\end{remark}

\section{Proof of the result}

\subsection{\bf Liminf inequality}
Throughout this section we write $s:=s_\e$, $r:=r_\e$ and $\sigma_{d-1}:=\mathcal{H}^{d-1}(S^{d-1})$ in order to simplify the notation.

Let $u\in L^{2}(\Omega)$ and consider a sequence $(u_\e)_\e$ converging to $u$ in $L^2(\Omega)$. Without loss of generality, we may assume that $\sup_\e F_{\e}(u_\e)< \infty$ which implies that $u_\e\in W^{s_,2}(\Omega)$ for every $\e>0$. 

In light of Lemma \ref{lemma:locality}, we aim at proving that $u\in 
W^{1,2}(\Omega')$ for every $\Omega'$ open subset well contained in $\Omega$ and that
\begin{eqnarray*}\label{mainclaim}
   &&\hskip-2cm\liminf_{\e\to0} (1-s) \iint_{\Omega\times \Omega \cap \{(x,y) : |x-y|\leq r\}}a\left(\frac{x}{\e}\right) \frac{|u_\e(x)-u_\e(y)|^2}{|x-y|^{d+2s}}\,dxdy \\
   &\geq& \frac{\sigma_{d-1}}{2d}\int_{\Omega'} \langle A_{\rm hom}\nabla  u,\nabla u\rangle\,dx,
\end{eqnarray*}
 where $r=\e^{\frac{1}{2}}(1-s)^{\frac{1}{4}}$. By the independence of the left-hand side from $\Omega'$ in particular this implies that $u\in W^{1,2}(\Omega)$.

Applying  Lemma \ref{lemma:locality}, the change of variables $\eta:=y-x$ and $\xi:= \eta/r$, and then the coarea formula, we get
\begin{eqnarray*} 
    \nonumber&&
\hskip-1.5cm    F_\e(u_\e)+o_\e(1) \\ \nonumber
&=& (1-s) \iint_{\Omega\times \Omega \cap \{(x,y) : |x-y|\leq r\}}a\left(\frac{x}{\e}\right)\frac{|u_\e(x)-u_\e(y)|^2}{|x-y|^{d+2s}}\,dxdy \\ \nonumber
    &=& (1-s) \int_{B_{r}(0)}\frac{1}{|\eta|^{d-2(1-s)}}\int_{\{x\in\Omega \,:\,x+\eta\in\Omega\}}a\left(\frac{x}{\e}\right)\frac{|u_\e(x+\eta)-u_\e(x)|^2}{|\eta|^2}\,dxd\eta \\ \nonumber
    &=& (1-s) \int_{B_1(0)}\frac{r^{2(1-s)}}{|\xi|^{d-2(1-s)}}\int_{\{x\in\Omega \,:\,x+r\xi\in\Omega\}}a\left(\frac{x}{\e}\right)\frac{|u_\e(x+r\xi)-u_\e(x)|^2}{|r\xi|^2}\,dxd\xi \\ \nonumber
    &=& (1-s) \int_0^1 \rho^{d-1} \int_{S^{d-1}} \frac{r^{2(1-s)}}{\rho^{d-2(1-s)}}\\ \nonumber 
    && \qquad \qquad \qquad \int_{\{x\in\Omega \,:\,x+r\rho\nu\in\Omega\}}a\left(\frac{x}{\e}\right)\frac{|u_\e(x+r\rho\nu)-u_\e(x)|^2}{|r\rho|^2} dxd\mathcal{H}^{d-1}(\nu) d\rho.
\end{eqnarray*}
For $\overline{\nu}\in V$ (recall the notation of Remark \ref{notation}) we set
\[
\mathcal{I}^r_{\rho\overline{\nu}}:=\{k\in \mathbb Z_{\rho\overline{\nu}}^d : rk+rQ_{\rho\overline{\nu}} \subset \subset \Omega\},
\]
and note that for every $\nu\in S^{d-1}$ it holds
\[
\bigcup_{k\in \mathcal{I}^r_{\rho \overline{\nu}}} rk+rQ_{\rho\overline{\nu}} \subseteq \{x\in \Omega : x+r\rho\nu\in\Omega\}
\]
 for every $\overline{\nu}\in V$. Hence, we have
\begin{eqnarray}\nonumber \label{liminf1}  
&&\hskip-1.5cm F_\e(u_\e)+o_\e(1) \\
&& \hskip-1.5cm \geq(1-s)  \int_0^1 \frac{r^{2(1-s)}}{\rho^{-1+2s}} \int_{S^{d-1}} \sum_{k\in \mathcal{I}^r_{\rho\overline{\nu}}} \int_{rk+rQ_{\rho\overline{\nu}}} {\hskip-0.3cm} a\left(\frac{x}{\e}\right)\frac{|u_\e(x+r\rho\nu)-u_\e(x)|^2}{|r\rho|^2} dxd\mathcal{H}^{d-1}(\nu) d\rho.
 \end{eqnarray}
 
We shall exploit the uniform continuity of the function $a$ to factor it out from the inner integral; for this reason, we introduce a modulus of continuity $\omega:[0,+\infty)\rightarrow[0,+\infty)$; that is, an increasing continuous function such that $\omega(0)=0$ and $|a(x_1)-a(x_2)|\leq\omega(|x_1-x_2|)$ for every $x_1,x_2\in\Rd$.

We rewrite the right-hand side of \eqref{liminf1} as
\begin{eqnarray*}\nonumber &&
    \hskip-0.8cm (1-s) \int_0^1 \frac{r^{2(1-s)}}{\rho^{-1+2s}} \int_{S^{d-1}} \sum_{k\in \mathcal{I}^r_{\rho\overline{\nu}}}  a\left(\frac{rk}{\e}\right) \int_{rk+rQ_{\rho\overline{\nu}}}\frac{|u_\e(x+r\rho\nu)-u_\e(x)|^2}{|r\rho|^2} dxd\mathcal{H}^{d-1}(\nu) d\rho \\  \nonumber 
    && \hskip-1.1cm + (1-s) \int_0^1 \frac{r^{2(1-s)}}{\rho^{-1+2s}}  \\
    &&\int_{S^{d-1}}\sum_{k\in \mathcal{I}^r_{\rho\overline{\nu}}} \int_{rk+rQ_{\rho\overline{\nu}}} {\hskip-0.1cm} \Bigl(a\left(\frac{x}{\e}\right)-a\Bigl(\frac{rk}{\e}\Bigr)\Bigr)\frac{|u_\e(x+r\rho\nu)-u_\e(x)|^2}{|r\rho|^2} dxd\mathcal{H}^{d-1}(\nu) d\rho, 
\end{eqnarray*}
and we note that the last term is negligible as $\e\to0$ as its absolute value is estimated from above by
\begin{eqnarray*}  && \hskip-2cm \omega\Bigl(\frac{r\sqrt{d}}{\e}\Bigr) (1-s) \int_0^1 \frac{r^{2(1-s)}}{\rho^{-1+2s}} \int_{S^{d-1}} \int_{\{x+r\rho\nu \in \Omega\}} \frac{|u_\e(x+r\rho\nu)-u_\e(x)|^2}{|r\rho|^2} dxd\mathcal{H}^{d-1}(\nu) d\rho \\
&=& \omega\Bigl(\frac{r\sqrt{d}}{\e}\Bigr) (1-s) \int_{B_1(0)}\frac{r^{2(1-s)}}{|\xi|^{d-2(1-s)}}\int_{\{x+r\xi \in \Omega\}}\frac{|u_\e(x+r\xi)-u_\e(x)|^2}{|r\xi|^2}\,dx d\xi \\ \nonumber
   &\leq& \omega\Bigl(\frac{r\sqrt{d}}{\e}\Bigr)\frac{1}{\alpha} F_\e(u_\e),
\end{eqnarray*}
which tends to $0$ since by the assumption $\sup_\e F_\e(u_\e) <+\infty$ and since $r/\e\to0$ by \eqref{eq:ratio}.

By this fact and by formulas \eqref{hausdorffmeas} and \eqref{disintegration}, inequality \eqref{liminf1} turns into
\begin{eqnarray} && \nonumber
\hskip-0.8cm F_\e(u_\e)+o_\e(1) \\ \nonumber
{\hskip-1.5cm} &\geq&(1-s) \int_0^1 \frac{r^{2(1-s)}}{\rho^{-1+2s}} \int_{S^{d-1}} \sum_{k\in \mathcal{I}^r_{\rho\overline{\nu}}} a\left(\frac{rk}{\e}\right)\int_{rk+rQ_{\rho\overline{\nu}}} \frac{|u_\e(x+r\rho\nu)-u_\e(x)|^2}{|r\rho|^2} dxd\mathcal{H}^{d-1}(\nu) d\rho\\ \nonumber
\hskip-1.5cm&=& (1-s) \int_0^1  \frac{r^{2(1-s)}}{\rho^{-1+2s}} \int_{S^{d-1}} \frac{1}{d} \sum_{n=1}^d \frac{\sigma_{d-1}}{\mathcal{H}^{k_d}(V)} \\ \nonumber
&& \qquad \int_{V_n^\nu} \sum_{k\in \mathcal{I}^r_{\rho \overline{\nu}}} a\left(\frac{rk}{\e}\right)\int_{rk+rQ_{\rho\overline{\nu}}}\frac{|u_\e(x+r\rho\nu)-u_\e(x)|^2}{|r\rho|^2} dx d\mathcal{H}^{k_d-(d-1)}(\overline{\nu}_n) d\mathcal{H}^{d-1}(\nu) d\rho \\ \nonumber
&=& \frac{1-s}{d} \frac{\sigma_{d-1}}{\mathcal{H}^{k_d}(V)} \int_0^1  \frac{r^{2(1-s)}}{\rho^{-1+2s}}  \\ \label{lastterm}
&& \qquad \qquad  \int_V  \sum_{n=1}^d \sum_{k\in \mathcal{I}^r_{\rho \overline{\nu}}}  a\left(\frac{rk}{\e}\right) \int_{rk+rQ_{\rho\overline{\nu}}}\frac{|u_\e(x+r\rho\nu_n)-u_\e(x)|^2}{|r\rho|^2} dx d\mathcal{H}^{k_d}(\overline{\nu}) d\rho.
\end{eqnarray}
To produce a lower bound for \eqref{lastterm}, we take advantage of a discretization argument based on piecewise-affine  auxiliary functions.

Given sufficiently small $\e>0$, $\rho\in (0,1)$ and $\overline{\nu}\in V$ we define the function $u_\e^{\rho\overline{\nu}}$ in two steps:

    $\bullet$ first, we assign values on the lattice $\mathcal{I}^r_{\rho\overline{\nu}}$ setting
    \[
    u_\e^{\rho\overline{\nu}}(rk)= \displaystyle \frac{1}{|r\rho|^d}\int_{rk+rQ_{\rho\overline{\nu}}} u_\e\,dx \qquad \hbox{ for every } k\in \mathcal{I}^r_{\rho\overline{\nu}};
    \] 
    
    $\bullet$ then, given $k$ in the coarser lattice $\mathcal{I}^{2r}_{\rho\overline{\nu}}$, consider the cube $rk+rQ_{\rho\overline{\nu}}$, $\tau$ 
a permutation of the indices $\{1,...,d\}$, and $rk+r\Delta^\tau_{\rho\overline{\nu}}$ the corresponding simplex in Kuhn's decomposition, on such simplex we define $u^{\rho\overline{\nu}}_\e$ being the affine interpolation of the previously defined values $u^{\rho \overline{\nu}}_\e(rk) = u^{\rho \overline{\nu}}_\e(rk+r\Delta^{\tau,0}_{\rho\overline{\nu}}),\, u^{\rho\overline{\nu}}_\e(rk+r\Delta^{\tau, 1}_{\rho\overline{\nu}}),..., \,u^{\rho\overline{\nu}}_\e(rk+r\Delta^{\tau, d}_{\rho\overline{\nu}})$.

Note that $u_\e^{\rho\overline{\nu}}$ is well defined as a continuous piecewise-affine function on $\bigcup_{k\in \mathcal{I}^{2r}_{\rho\overline{\nu}}} rk+rQ_{\rho\overline{\nu}}$; hence, on each simplex $rk+r\Delta_{\rho\overline{\nu}}^\tau$, its gradient is constant and by \eqref{vertices} it holds
\begin{eqnarray*} 
\nonumber
    &&\hskip-1.5cm\int_{rk+r\Delta_{\rho\overline{\nu}}^\tau} |\nabla u_\e^{\rho\overline{\nu}}|^2\,dx \\
    &=& \sum_{j=1}^d \int_{rk+r\Delta_{\rho\overline{\nu}}^\tau} \frac{|u_\e^{\rho\overline{\nu}}(rk+r\Delta_{\rho\overline{\nu}}^{\tau,j})-u_\e^{\rho\overline{\nu}}(rk+r\Delta_{\rho\overline{\nu}}^{\tau,j-1})|^2}{|r\rho|^2}\,dx      \\ \nonumber
    &=& \frac{1}{|r\rho|^2}\sum_{j=1}^d \int_{rk+r\Delta_{\rho\overline{\nu}}^\tau} \biggl|\frac{1}{|r\rho|^d}\Bigl(\int_{rk+r\Delta_{\rho\overline{\nu}}^{\tau,j}+rQ_{\rho\overline{\nu}}}\hspace{-0.2cm}u_\e\, dy-\int_{rk+r\Delta_{\rho\overline{\nu}}^{\tau,j-1}+rQ_{\rho\overline{\nu}}}\hspace{-0.2cm} u_\e\, dy \Bigr)\biggr|^2dx \\ \nonumber
    &=& \frac{1}{|r\rho|^2}\sum_{j=1}^d \int_{rk+r\Delta_{\rho\overline{\nu}}^\tau} \biggl| \frac{1}{|r\rho|^d}\Bigl(\int_{rk+r\Delta_{\rho\overline{\nu}}^{\tau,j-1}+rQ_{\rho\overline{\nu}}} \hspace{-0.2cm} u_\e(y+r\rho\nu_{\tau(j)})-u_\e(y)\, dy\Bigr)\biggr|^2 dx \\ \nonumber
    &\leq& \frac{1}{|r\rho|^2}\sum_{j=1}^d \int_{rk+r\Delta_{\rho\overline{\nu}}^\tau} \frac{1}{|r\rho|^d}\int_{rk+r\Delta_{\rho\overline{\nu}}^{\tau,j-1}+rQ_{\rho\overline{\nu}}} \hspace{-0.2cm} |u_\e(y+r\rho\nu_{\tau(j)})-u_\e(y)|^2 dy\,dx \\ \nonumber  
    &=& \frac{1}{|r\rho|^d}\sum_{j=1}^d \int_{rk+r\Delta_{\rho\overline{\nu}}^\tau} \int_{rk+r\Delta_{\rho\overline{\nu}}^{\tau,j-1}+rQ_{\rho\overline{\nu}}} \frac{|u_\e(y+r\rho\nu_{\tau(j)})-u_\e(y)|^2}{|r\rho|^2}\, dy\,dx \\ 
    &=& \frac{1}{d!} \sum_{j=1}^d \int_{rk+r\Delta_{\rho\overline{\nu}}^{\tau,j-1}+rQ_{\rho\overline{\nu}}} \frac{|u_\e(y+r\rho\nu_{\tau(j)})-u_\e(y)|^2}{|r\rho|^2}\, dy,
\end{eqnarray*}
where we used Jensen's inequality and the fact that $|rk+r\Delta_{\rho\overline{\nu}}^\tau|=|r\rho|^d/d!$.

Summing over all permutations $\tau$ and $k\in \mathcal{I}^{2r}_{\rho\overline{\nu}}$ and reinstating the coefficient $a$, we get
\begin{eqnarray} \label{liminf3} && \nonumber 
\hskip-1.8cm \sum_{k\in \mathcal{I}^{2r}_{{\rho\overline{\nu}}}} a\left(\frac{rk}{\e}\right) \int_{rk+rQ_{\rho\overline{\nu}}} |\nabla u_\e^{\rho\overline{\nu}}|^2\,dx   
\\ \nonumber
&=& \sum_{k\in \mathcal{I}^{2r}_{{\rho\overline{\nu}}}} a\left(\frac{rk}{\e}\right)\sum_\tau \int_{rk+r\Delta_{\rho\overline{\nu}}^\tau} |\nabla u_\e^{\rho\overline{\nu}}|^2\,dx \\ \nonumber
    &\leq& \sum_{k\in \mathcal{I}^{2r}_{{\rho\overline{\nu}}}} a\left(\frac{rk}{\e}\right)\sum_\tau \frac{1}{d!} \sum_{j=1}^d \int_{rk+r\Delta_{\rho\overline{\nu}}^{\tau,j-1}+rQ_{\rho\overline{\nu}}} \frac{|u_\e(y+r\rho\nu_{\tau(j)})-u_\e(y)|^2}{|r\rho|^2}\, dy \\ \label{term}
    &=& \frac{1}{d!} \sum_{n=1}^d \sum_{\{\tau,\,j \,| \, \tau(j)=n\}} \sum_{k\in \mathcal{I}^{2r}_{{\rho\overline{\nu}}}} a\left(\frac{rk}{\e}\right) \int_{rk+r\Delta_{\rho\overline{\nu}}^{\tau,j-1}+rQ_{\rho\overline{\nu}}} \hspace{-0.4 cm} \frac{|u_\e(y+r\rho\nu_n)-u_\e(y)|^2}{|r\rho|^2}\, dy. 
\end{eqnarray}
Keeping $n,j,\tau$ fixed, we put $h:=k+\Delta_{\rho\overline{\nu}}^{\tau,j-1}$ and note that $k\in \mathcal{I}^{2r}_{\rho\overline{\nu}}$ implies $h\in\mathcal{I}^{r}_{\rho\overline{\nu}}$  so that \eqref{term} is less than or equal to
\begin{eqnarray*} && \nonumber
    \hskip-2.5cm \frac{1}{d!} \sum_{n=1}^d \sum_{\{\tau,\,j \,| \, \tau(j)=n\}}\sum_{h\in \mathcal{I}^r_{{\rho\overline{\nu}}}} a\left(\frac{rh}{\e}\right)  \int_{rh+rQ_{\rho\overline{\nu}}} \hspace{-0.4 cm} \frac{|u_\e(y+r\rho\nu_n)-u_\e(y)|^2}{|r\rho|^2}\, dy + E(\overline{\nu}) \\ \label{term1}
    &=& \sum_{n=1}^d \sum_{h\in \mathcal{I}^r_{{\rho\overline{\nu}}}} a\left(\frac{rh}{\e}\right) \int_{rh+rQ_{\rho\overline{\nu}}} \frac{|u_\e(y+r\rho\nu_n)-u_\e(y)|^2}{|r\rho|^2}\, dy + E(\overline{\nu}),
\end{eqnarray*}
where we used that the cardinality of the set  $\{\tau, j\, |\, \tau(j)=n\}$ is $d!$ for every $n=1,...,d$, and we set
\begin{equation*}
E(\overline{\nu}):= \sum_{n=1}^d \sum_{h\in \mathcal{I}^{r}_{{\rho\overline{\nu}}}} \biggl(a\biggl(\frac{rh-r\Delta_{\rho\overline{\nu}}^{\tau,j-1}}{\e}\biggr)-a\left(\frac{rh}{\e}\right)\biggr) \int_{rh+rQ_{\rho\overline{\nu}}} \frac{|u_\e(y+r\rho\nu_n)-u_\e(y)|^2}{|r\rho|^2}\, dy.
\end{equation*}

Summarizing, we have proved that for every $\e, \rho, \overline{\nu}$, it holds
\begin{eqnarray}  \label{ineq}\nonumber
&&\hskip-2cm\sum_{n=1}^d \sum_{k\in \mathcal{I}^{r}_{{\rho\overline{\nu}}}} a\left(\frac{rk}{\e}\right) \int_{rk+rQ_{\rho\overline{\nu}}} \hskip-0.3cm\frac{|u_\e(y+r\rho\nu_n)-u_\e(y)|^2}{|r\rho|^2}\,dy \\
&&\geq \sum_{k\in \mathcal{I}^{2r}_{{\rho\overline{\nu}}}} a\left(\frac{rk}{\e}\right) \int_{rk+rQ_{\rho\overline{\nu}}} |\nabla u_\e^{\rho\overline{\nu}}|^2\,dx  
-\,\, E(\overline{\nu}),
\end{eqnarray}
and therefore, having achieved an estimate in terms of the inner intergal in \eqref{lastterm}, we get
\begin{eqnarray} && \nonumber \hskip-1.2cm F_\e(u_\e)+o_\e(1) \\ \nonumber
&\geq& \frac{1-s}{d} \frac{\sigma_{d-1}}{\mathcal{H}^{k_d}(V)} \int_0^1  \frac{r^{2(1-s)}}{\rho^{-1+2s}} \\ \label{midterm0}
&& \quad \quad \int_V  \sum_{n=1}^d \sum_{k\in \mathcal{I}^r_{\rho \overline{\nu}}}  a\left(\frac{rk}{\e}\right) \int_{rk+rQ_{\rho\overline{\nu}}}\frac{|u_\e(x+r\rho\nu_n)-u_\e(x)|^2}{|r\rho|^2} dx d\mathcal{H}^{k_d}(\overline{\nu}) d\rho \\ \label{midterm}
&\geq& \frac{1-s}{d} \frac{\sigma_{d-1}}{\mathcal{H}^{k_d}(V)} \int_0^1  \frac{r^{2(1-s)}}{\rho^{-1+2s}}\int_V \sum_{k\in \mathcal{I}^{2r}_{{\rho\overline{\nu}}}} a\left(\frac{rk}{\e}\right) \int_{rk+rQ_{\rho\overline{\nu}}} |\nabla u_\e^{\rho\overline{\nu}}|^2\,dx   d\mathcal{H}^{k_d}(\overline{\nu}) d\rho \\ \label{midterm2}
&& \qquad \qquad \qquad \qquad \qquad - \, \frac{1-s}{d} \frac{\sigma_{d-1}}{\mathcal{H}^{k_d}(V)} \int_0^1  \frac{r^{2(1-s)}}{\rho^{-1+2s}}\int_V E(\overline{\nu})  d\mathcal{H}^{k_d}(\overline{\nu}) d\rho. 
\end{eqnarray}

Note that \eqref{midterm2} vanishes as $\e\to0$; indeed, considering the modulus of continuity, the bound from above \eqref{eq:bounds} and the bound from above involving \eqref{midterm0}, we have
\begin{eqnarray*} && \nonumber
\hskip-1cm  \frac{1-s}{d} \frac{\sigma_{d-1}}{\mathcal{H}^{k_d}(V)} \int_0^1  \frac{r^{2(1-s)}}{\rho^{-1+2s}} \int_V E(\overline{\nu})\,  d\mathcal{H}^{k_d}(\overline{\nu}) d\rho \\ \nonumber
    &\leq& \omega\Bigl(\frac{r\sqrt{d}}{\e}\Bigr)\frac{1-s}{d} \frac{\sigma_{d-1}}{\mathcal{H}^{k_d}(V)}\int_0^1  \frac{r^{2(1-s)}}{\rho^{-1+2s}} \\ \nonumber
    && \qquad \qquad \qquad \qquad \int_V \sum_{n=1}^d \sum_{h\in \mathcal{I}^{r}_{{\rho\overline{\nu}}}}  \int_{rh+rQ_{\rho\overline{\nu}}} \frac{|u_\e(y+r\rho\nu_n)-u_\e(y)|^2}{|r\rho|^2}\, dy d\mathcal{H}^{k_d}(\overline{\nu})d\rho \\ \nonumber
    &\leq& \omega\Bigl(\frac{r\sqrt{d}}{\e}\Bigr)\frac{1-s}{d} \frac{\sigma_{d-1}}{\mathcal{H}^{k_d}(V)}\frac{1}{\alpha}\int_0^1  \frac{r^{2(1-s)}}{\rho^{-1+2s}} \\ \nonumber
    && \qquad \quad \qquad \int_V \sum_{n=1}^d \sum_{h\in \mathcal{I}^{r}_{{\rho\overline{\nu}}}}  a\left(\frac{rh}{\e}\right) \int_{rh+rQ_{\rho\overline{\nu}}}\frac{|u_\e(y+r\rho\nu_n)-u_\e(y)|^2}{|r\rho|^2}\, dy d\mathcal{H}^{k_d}(\overline{\nu})d\rho \\ \nonumber
    &\leq& \omega\Bigl(\frac{r\sqrt{d}}{\e}\Bigr)\frac{1}{\alpha}F_\e(u_\e)+o_\e(1),
\end{eqnarray*}
which tends to $0$ since $\sup_\e F_\e(u_\e)<+\infty$ and since $r/\e\to0$ by \eqref{eq:ratio}.

As for \eqref{midterm}, we can reinsert the coefficient $a$ in the integral; in particular, taking into account the modulus of continuity and applying \eqref{eq:bounds}, \eqref{ineq}, and once more the inequality leading to \eqref{midterm0}, we have
\begin{eqnarray*} && \nonumber
    \hskip-1cm \frac{1-s}{d} \frac{\sigma_{d-1}}{\mathcal{H}^{k_d}(V)} \biggl| \int_0^1  \frac{r^{2(1-s)}}{\rho^{-1+2s}}\int_V \sum_{k\in \mathcal{I}^{2r}_{{\rho\overline{\nu}}}} \int_{rk+rQ_{\rho\overline{\nu}}} \biggl(a\biggl(\frac{rk}{\e}\biggr)-a\biggl(\frac{x}{\e}\biggr)\biggr)  |\nabla u_\e^{\rho\overline{\nu}}|^2\,dx   d\mathcal{H}^{k_d}(\overline{\nu}) d\rho \biggr| \\ \nonumber
    &\leq&  \omega\Bigl(\frac{r\sqrt{d}}{\e}\Bigr)\frac{1-s}{d} \frac{\sigma_{d-1}}{\mathcal{H}^{k_d}(V)} \int_0^1  \frac{r^{2(1-s)}}{\rho^{-1+2s}}\int_V \sum_{k\in \mathcal{I}^{2r}_{{\rho\overline{\nu}}}} \int_{rk+rQ_{\rho\overline{\nu}}}  |\nabla u_\e^{\rho\overline{\nu}}|^2\,dx   d\mathcal{H}^{k_d}(\overline{\nu}) d\rho \\ \nonumber
    &\leq&  \omega\Bigl(\frac{r\sqrt{d}}{\e}\Bigr)\frac{1-s}{d} \frac{\sigma_{d-1}}{\mathcal{H}^{k_d}(V)}\frac{1}{\alpha} \int_0^1  \frac{r^{2(1-s)}}{\rho^{-1+2s}} \\ \nonumber
    && \qquad  \int_V \sum_{n=1}^d \sum_{k\in \mathcal{I}^{r}_{{\rho\overline{\nu}}}} a\left(\frac{rk}{\e}\right) \int_{rk+rQ_{\rho\overline{\nu}}} \hskip-0.3cm\frac{|u(y+r\rho \nu_n)-u(y)|^2}{|r\rho|^2}\,dy + E(\overline{\nu}) d\mathcal{H}^{k_d}(\overline{\nu}) d\rho \\ \nonumber
    &\leq& \omega\Bigl(\frac{r\sqrt{d}}{\e}\Bigr)\frac{1}{\alpha} F_\e(u_\e) + o_\e(1),
\end{eqnarray*}
and we conclude as above.

We finally obtain
\begin{eqnarray} && \nonumber
 \hskip-1.7cm   F_\e(u_\e)+o_\e(1) \\ \nonumber
&\geq& \frac{1-s}{d} \frac{\sigma_{d-1}}{\mathcal{H}^{k_d}(V)} \int_0^1  \frac{r^{2(1-s)}}{\rho^{-1+2s}}\int_V \sum_{k\in \mathcal{I}^{2r}_{{\rho\overline{\nu}}}}\int_{rk+rQ_{\rho\overline{\nu}}}  a\left(\frac{x}{\e}\right)  |\nabla u_\e^{\rho\overline{\nu}}|^2\,dx   d\mathcal{H}^{k_d}(\overline{\nu}) d\rho \\  \label{intermediate} 
&\geq& \frac{1-s}{d} \frac{\sigma_{d-1}}{\mathcal{H}^{k_d}(V)} \int_0^1  \frac{r^{2(1-s)}}{\rho^{-1+2s}}\int_V \int_{\Omega'}  a\left(\frac{x}{\e}\right)  |\nabla u_\e^{\rho\overline{\nu}}|^2\,dx   d\mathcal{H}^{k_d}(\overline{\nu}) d\rho,
\end{eqnarray}
where $\Omega'$ is any open subset well contained in $\Omega$ that we may assume to be well contained in $\displaystyle \bigcup_{k \in \mathcal{I}^{2r}_{\rho\overline{\nu}}} rk+rQ_{\rho\overline{\nu}}$ for $\e$ sufficiently small.

\smallskip

In the final part of the proof, we average the auxiliary functions obtained by discretization weighting them with the appropriate kernel. 

For every $\e>0$ sufficiently small we define the probability measure $\mu_\e$ on $[0,1] \times V$ by
\[
d\mu_\e(\rho, \overline{\nu}):= 2(1-s)\rho^{1-2s}\mathcal{L}^1 \otimes \mathcal{H}^{k_d} [\mathcal{H}^{k_d}(V)]^{-1}  ;
\]
then we rewrite \eqref{intermediate} as
\begin{eqnarray}\label{disega}\nonumber&&
\hskip-3cm \frac{\sigma_{d-1}}{2d} \int_{[0,1]\times V} r^{2(1-s)} \int_{\Omega'} a\left(\frac{x}{\e}\right)   |\nabla u_\e^{\rho\overline{\nu}}|^2\,dx\,d\mu_\e(\rho,\overline{\nu}) 
\\ \nonumber
&=& \frac{\sigma_{d-1}}{2d} \int_{\Omega'} r^{2(1-s)} a\left(\frac{x}{\e}\right) \int_{[0,1]\times V}   |\nabla u_\e^{\rho\overline{\nu}}|^2\,   d\mu_\e(\rho,\overline{\nu})\,dx \\ \label{liminf7}
&\geq& \frac{\sigma_{d-1}}{2d} \int_{\Omega'} r^{2(1-s)} a\left(\frac{x}{\e}\right) \Bigl|\int_{[0,1]\times V}  \nabla u_\e^{\rho\overline{\nu}}\,   d\mu_\e(\rho, \overline{\nu})\Bigr|^2\,dx
\end{eqnarray}
by Jensen's inequality.

We set 
$$
\overline u_\e(x):=\int_{[0,1]\times V}u^{\rho\overline{\nu}}_\e(x)\, d\mu_\e(\rho, \overline{\nu}),
$$
and note that, since $u_\e^{\rho\overline{\nu}}$ is continuous on $\Omega'$, this function belongs to $W^{1,2}(\Omega')$ with
\[
\nabla \overline u_\e(x)=\int_{[0,1]\times V}\nabla u^{\rho\overline{\nu}}_\e(x)\, d\mu_\e(\rho, \overline{\nu}).
\]
As a consequence, \eqref{liminf7} reduces to  
$$
\frac{\sigma_{d-1}}{ 2d}\int_{\Omega'} r^{2(1-s)} a\Bigl(\frac{x}{\e}\Bigr) |\nabla  \overline u_\e(x)|^2dx
$$
and our final goal is to show that
\begin{equation*}
    \liminf_{\e\to0} \frac{\sigma_{d-1}}{2d}\int_{\Omega'} a\Bigl(\frac{x}{\e}\Bigr) |\nabla  \overline u_\e(x)|^2dx \,\,\, \geq \,\,\, \frac{\sigma_{d-1}}{2d}\int_{\Omega'} \langle A_{\rm hom}\nabla  u,\nabla u\rangle\,dx,
\end{equation*}
where we used that $r^{2(1-s)}\to1$, as already stated in \eqref{eq:power}.

To check this it is sufficient to prove that $\overline{u}_\e \to u$ in $L^2(\Omega')$.
Indeed, this implies that $u$ is the $L^2(\Omega')$-limit of a bounded sequence in $W^{1,2}(\Omega')$, so that $u\in\ W^{1,2}(\Omega')$ and the conclusion follows by the liminf inequality provided by the Homogenization Theorem (see \cite[Theorem 14.7]{BDF}). 

First we prove the convergence under the additional assumption that $(u_\e)_\e$ is bounded in $L^\infty(\Omega')$.
If this is the case, it suffices to prove that $\overline{u}_\e-u_\e \to 0$ in $L^1(\Omega')$, indeed, since  $(\overline{u}_\e)_\e$ is bounded by construction in $L^{\infty}(\Omega')$, we conclude by interpolation.

\smallskip

As $\Omega' \subseteq \displaystyle\bigcup_{k\in \mathcal{I}^{2r}_{\rho\overline{\nu}}}rk+rQ_{\rho\overline{\nu}}$, we have
\begin{eqnarray} \nonumber
    \|\overline{u}_\e-u_\e\|_{L^1(\Omega')} &=& \int_{\Omega'} \biggl| u_\e(x) -\int_{[0,1]\times V}u_\e^{\rho\overline{\nu}}(x) d\mu_\e(\rho,\overline{\nu})\biggr|\,dx \\ \nonumber
    & \leq & \int_{[0,1]\times V} \int_{\Omega'} |u_\e(x) - u_\e^{\rho\overline{\nu}}(x)|\,dx\,d\mu_\e(\rho,\overline{\nu}) \\ \nonumber 
    & \leq & \int_{[0,1]\times V} \sum_{k\in \mathcal{I}^{2r}_{\rho\overline{\nu}}} \int_{rk+rQ_{\rho\overline{\nu}}} |u_\e(x) - u_\e^{\rho\overline{\nu}}(x)|\,dx\, d\mu_\e(\rho,\overline{\nu}) \\ \label{L1conv1}
    &\leq&  \int_{[0,1]\times V} \sum_{k\in \mathcal{I}^{2r}_{\rho\overline{\nu}}} \int_{rk+rQ_{\rho\overline{\nu}}} |u_\e(x) - u_\e^{\rho\overline{\nu}}(rk)|\,dx\,d\mu_\e(\rho,\overline{\nu}) \\ \label{L1conv2}
    &+& \int_{[0,1]\times V} \sum_{k\in \mathcal{I}^{2r}_{\rho\overline{\nu}}} \int_{rk+rQ_{\rho\overline{\nu}}} |u^{\rho\overline{\nu}}_\e(x) - u_\e^{\rho\overline{\nu}}(rk)|\,dx\,d\mu_\e(\rho,\overline{\nu}).
\end{eqnarray}
We first prove that \eqref{L1conv1} tends to $0$ as $\e$ vanishes. Applying H\"older's inequality and a rescaled version of Poincaré-Wirtinger's inequality (see \cite[Theorem 6.33]{Leoni2023AFC}) we get
\begin{eqnarray*}\label{attempt4} && \hskip-2.5cm\nonumber
    \sum_{k\in \mathcal{I}^{2r}_{\rho\overline{\nu}}} \int_{rk+rQ_{\rho\overline{\nu}}} |u_\e(x) - u_\e^{\rho\overline{\nu}}(rk)|\,dx \\ \nonumber
    &\leq& |r\rho|^{\frac{d}{2}} \sum_{k\in \mathcal{I}^{2r}_{\rho\overline{\nu}}} \Bigl( \int_{rk+rQ_{\rho\overline{\nu}}} |u_\e(x) - u_\e^{\rho\overline{\nu}}(rk)|^2\,dx \Bigr)^{\frac{1}{2}} \\ \nonumber 
    &\leq& P|r\rho|^{\frac{d}{2}+s} \sum_{k\in \mathcal{I}^{2r}_{\rho\overline{\nu}}} \Bigl(\iint_{(rk+rQ_{\rho\overline{\nu}})\times(rk+rQ_{\rho\overline{\nu}})}\frac{|u_\e(x)-u_\e(y)|^2}{|x-y|^{d+2s}}\,dxdy\Bigr)^{\frac{1}{2}},
    \end{eqnarray*} 
where $P$ is the Poincarè-Wirtinger constant for the  $d$-dimensional unit cube. Then we make use of the concavity of $x\mapsto x^{\frac{1}{2}}$ so that   
\begin{eqnarray*} && \hskip-1.8cm \nonumber
P|r\rho|^{\frac{d}{2}+s} \# \mathcal{I}^{2r}_{\rho\overline{\nu}} \sum_{k\in \mathcal{I}^{2r}_{\rho\overline{\nu}}}\frac{1}{\# \mathcal{I}^{2r}_{\rho\overline{\nu}}} \Bigl(\iint_{(rk+rQ_{\rho\overline{\nu}})\times(rk+rQ_{\rho\overline{\nu}})}\frac{|u_\e(x)-u_\e(y)|^2}{|x-y|^{d+2s}}\,dxdy\Bigr)^{\frac{1}{2}} \\ \nonumber
&\leq& P|r\rho|^{\frac{d}{2}+s} \# \mathcal{I}^{2r}_{\rho\overline{\nu}} \Bigl( \sum_{k\in \mathcal{I}^{2r}_{\rho\overline{\nu}}} \frac{1}{\# \mathcal{I}^{2r}_{\rho\overline{\nu}}} \iint_{(rk+rQ_{\rho\overline{\nu}})\times(rk+rQ_{\rho\overline{\nu}})}\frac{|u_\e(x)-u_\e(y)|^2}{|x-y|^{d+2s}}\,dxdy  \Bigr)^{\frac{1}{2}} \\ \nonumber
&\leq& P|r\rho|^{\frac{d}{2}+s}  (\# \mathcal{I}^{2r}_{\rho\overline{\nu}})^{\frac{1}{2}} \Bigl( \iint_{\Omega \times \Omega}\frac{|u_\e(x)-u_\e(y)|^2}{|x-y|^{d+2s}}\,dxdy \Bigr)^{\frac{1}{2}} \\ \nonumber
&\leq& P|r\rho|^{\frac{d}{2}+s} |2 r\rho|^{-\frac{d}{2}}|\Omega|^{\frac{1}{2}}[u_\e]_{W^{s,2}(\Omega)} \\ \nonumber
&=& |r\rho|^s\,2^{-\frac{d}{2}} |\Omega|^{\frac{1}{2}} [u_\e]_{W^{s,2}(\Omega)}.
\end{eqnarray*} 
After integration we obtain that \eqref{L1conv1} is bounded above by
\begin{equation*}
r^s\,2^{1-\frac{d}{2}}|\Omega|^{\frac{1}{2}}(1-s) [u_\e]_{W^{s,2}(\Omega)} \int_0^1 \rho^{1-s}d\rho\,\,\, = \,\,\,r^s\,2^{1-\frac{d}{2}}|\Omega|^{\frac{1}{2}}(1-s) [u_\e]_{W^{s,2}(\Omega)} \frac{1}{2-s}.
\end{equation*}
Then we use that $[u_\e]_{W^{s,2}(\Omega)}(1-s)\leq F_\e(u_\e)/\alpha$, which is uniformly bounded in $\e$ by assumption, and that $r^s\to0$ to deduce that the above term vanishes.

\smallskip

We treat \eqref{L1conv2} with a similar argument. By the Cauchy-Schwarz inequality we have
\begin{eqnarray*} && \hskip-3cm \nonumber
    \int_{[0,1]\times V} \sum_{k\in \mathcal{I}^{2r}_{\rho\overline{\nu}}} \int_{rk+rQ_{\rho\overline{\nu}}} |u^{\rho\overline{\nu}}_\e(x) - u_\e^{\rho\overline{\nu}}(rk)|\,dx\,d\mu_\e(\rho,\overline{\nu}) \\ \nonumber
    &\leq& \int_{[0,1]\times V} \sum_{k\in \mathcal{I}^{2r}_{\rho\overline{\nu}}} \int_{rk+rQ_{\rho\overline{\nu}}} |\nabla u^{\rho\overline{\nu}}_\e(x)||x-rk|\,dx\,d\mu_\e(\rho,\overline{\nu}) \\ \nonumber
    &\leq& \sqrt{d}\int_{[0,1]\times V} |r\rho| \sum_{k\in \mathcal{I}^{2r}_{\rho\overline{\nu}}} \int_{rk+rQ_{\rho\overline{\nu}}} |\nabla u^{\rho\overline{\nu}}_\e(x)|\,dx\,d\mu_\e(\rho,\overline{\nu})
\end{eqnarray*}
and by H\"older's inequality and the concavity of $x \mapsto x^\frac{1}{2}$, this is bounded above by
\begin{eqnarray*} && \hskip-1.2cm \nonumber
\hskip-0.6cm \sqrt{d}\int_{[0,1]\times V} |r\rho|^{\frac{d}{2}+1}  \sum_{k\in \mathcal{I}^{2r}_{\rho\overline{\nu}}} \biggl(\int_{rk+rQ_{\rho\overline{\nu}}} |\nabla u^{\rho\overline{\nu}}_\e(x)|^2\,dx\biggr)^{\frac{1}{2}} d\mu_\e(\rho,\overline{\nu}) \\ \nonumber
&=& \sqrt{d}\int_{[0,1]\times V} |r\rho|^{\frac{d}{2}+1}  \#\mathcal{I}^{2r}_{\rho\overline{\nu}} \sum_{k\in \mathcal{I}^{2r}_{\rho\overline{\nu}}} \frac{1}{\#\mathcal{I}^{2r}_{\rho\overline{\nu}}} \biggl(\int_{rk+rQ_{\rho\overline{\nu}}} |\nabla u^{\rho\overline{\nu}}_\e(x)|^2\,dx\biggr)^{\frac{1}{2}} d\mu_\e(\rho,\overline{\nu}) \\ \nonumber
&\leq& \sqrt{d}\int_{[0,1]\times V} |r\rho|^{\frac{d}{2}+1}  (\#\mathcal{I}^{2r}_{\rho\overline{\nu}})^{\frac{1}{2}} \biggl(\sum_{k\in \mathcal{I}^{2r}_{\rho\overline{\nu}}} \int_{rk+rQ_{\rho\overline{\nu}}} |\nabla u^{\rho\overline{\nu}}_\e(x)|^2\,dx\biggr)^{\frac{1}{2}} d\mu_\e(\rho,\overline{\nu}) \\ \nonumber
&\leq& r\sqrt{d \, 2^{-d} |\Omega|}\int_{[0,1]\times V}  \biggl(\sum_{k\in \mathcal{I}^{2r}_{\rho\overline{\nu}}} \int_{rk+rQ_{\rho\overline{\nu}}} |\nabla u^{\rho\overline{\nu}}_\e(x)|^2\,dx\biggr)^{\frac{1}{2}} d\mu_\e(\rho, \overline{\nu}), 
\end{eqnarray*}
which by Jensen's inequality is less than or equal to
\begin{equation}\label{liminf8}
r \sqrt{d\, 2^{-d}|\Omega|} \Bigl(  \int_{[0,1]\times V} \sum_{k\in \mathcal{I}^{2r}_{\rho\overline{\nu}}}\int_{rk+rQ_{\rho\overline{\nu}}} |\nabla u_\e^{\rho\overline{\nu}}(x)|^2 dx\, d\mu_\e(\rho,\overline{\nu}) \Bigr)^{\frac{1}{2}}.     
\end{equation}
Now we write
\begin{eqnarray} && \nonumber \hskip-1cm \nonumber \int_{[0,1]\times V} \sum_{k\in \mathcal{I}^{2r}_{\rho\overline{\nu}}}\int_{rk+rQ_{\rho\overline{\nu}}} |\nabla u_\e^{\rho\overline{\nu}}(x)|^2 dx\, d\mu_\e(\rho,\overline{\nu})
\\ \nonumber
&=& \frac{2(1-s)}{\mathcal{H}^{k_d}(V)} \int_0^1  \frac{1}{\rho^{-1+2s}}\int_V \sum_{k\in \mathcal{I}^{2r}_{{\rho\overline{\nu}}}} \int_{rk+rQ_{\rho\overline{\nu}}} |\nabla u_\e^{\rho\overline{\nu}}|^2\,dx   d\mathcal{H}^{k_d}(\overline{\nu}) d\rho \\ \nonumber
&\leq& \hskip-0.3cm \Bigl(\frac{2d}{\alpha\sigma_{d-1}}\Bigr)\frac{1-s}{d} \frac{\sigma_{d-1}}{\mathcal{H}^{k_d}(V)} \int_0^1  \frac{r^{2(1-s)}}{\rho^{-1+2s}}\int_V \sum_{k\in \mathcal{I}^{2r}_{{\rho\overline{\nu}}}} a\left(\frac{rk}{\e}\right) \int_{rk+rQ_{\rho\overline{\nu}}} |\nabla u_\e^{\rho\overline{\nu}}|^2\,dx   d\mathcal{H}^{k_d}(\overline{\nu})d\rho
\end{eqnarray}
and we note that this last term equals \eqref{midterm} (up to a constant factor), therefore, as a byproduct of the previous computations, it is bounded above by   
\[
\frac{2d}{\alpha\sigma_{d-1}}F_\e(u_\e)+o_\e(1),
\]
which is uniformly bounded for small $\e$. We conclude that \eqref{liminf8} vanishes since $r\to0$.

Finally, we remove the boundedness condition by a truncation argument. Let $u^M:= (u \land M) \lor -M$, and note that $u_\e\to u$ in $L^2(\Omega)$ implies $u_\e^M\to u^M$ in $L^2(\Omega)$. Since $F_\e(u_\e) \geq F_\e(u_\e^M)$, we have
\[
\liminf_{\e\to0} F_\e(u_\e) \  \geq \  \liminf_{\e\to0} F_\e(u^M_\e)\  \geq \ \frac{\sigma_{d-1}}{2d}\int_{\Omega} \langle A_{\rm hom}\nabla u^M, \nabla u^M\rangle\,dx,
\]
and then
\[
\sup_M \|\nabla u^M\|_{L^2(\Omega)} \  \leq \  \frac{2d}{\alpha\sigma_{d-1}}\sup_\e F_\e(u_\e)\  < \  \ +\infty,
\]
so that $u\in W^{1,2}(\Omega)$ with $\nabla u^M \to \nabla u$. The proof is concluded once we let $M\to+\infty$.

\subsection{\bf Limsup inequality}

To prove the $\Gamma$-$\limsup$ inequality, we first study the case where $u$ is piecewise-affine. We then recover the inequality on the whole space $W^{1,2}(\Omega)$ by means of a density argument. For the sake of clarity, we first deal with the case where $u$ is affine.

\smallskip

\textit{Step $1$}. We first suppose that $u(x)=\langle z,x\rangle$, for some $z\in\mathbb{R}^d$. We recall that by the classical homogenization formula for $A_{\rm hom}$ (see \cite[Theorem 14.7]{BDF}) and by density, it holds
\be\label{eq:defphi}
\langle A_{\rm hom}z, z\rangle =\inf\Bigl\{\int_{(0,1)^d}a(y)\lvert\nabla{\varphi}(y)+z\rvert^2dy\,:\,\varphi\in C^\infty(\Rd),\,\,1\text{-periodic}\Bigr\}.
\ee

\noindent Fix $\delta>0$ and let $\varphi\in C^\infty(\Rd)$ be a $1$-periodic function such that
\be\label{eq:phidelta}
\int_{(0,1)^d}a(y)\lvert\nabla\varphi(y)+z\rvert^2 dy < \langle A_{\rm hom}z,z\rangle + \delta;
\ee
we claim that $u_\e(x)=\langle z,x\rangle+\e\varphi\left(\frac{x}{\e}\right)$ is an approximate recovery sequence for $u$, i.e., it is such that
\be\label{eq:claimaffine}
\limsup_{\e\to0} F_\e(u_\e) \leq\frac{\sigma_{d-1}}{2d}\int_\Omega\langle A_{\rm hom}z,z\rangle dx+\delta.
\ee
This, combined with a diagonal argument and the arbitrariness of $\delta>0$, will lead to the limsup inequality.

In order to prove \eqref{eq:claimaffine}, we take advantage of Lemma \ref{lemma:locality} with $r_\e=1$, and consider the first-order Taylor expansion of $\varphi$ to get
\begin{eqnarray*} \label{eq:Feueaffred} \nonumber
&&\hskip-1cm F_\e(u_\e) +o_\e(1)\\ \nonumber
&=& (1-s)\iint_{\Om\times\Om} a\left(\frac{x}{\e}\right)\frac{\lvert \langle z,x-y\rangle+\e\varphi\left(\frac{x}{\e}\right)-\e\varphi\left(\frac{y}{\e}\right)\rvert^2}{\lvert x-y\rvert^{d+2s}}dxdy \\
&=& (1-s)\iint_{\Om\times\Om\cap\{\lvert x-y\rvert\leq 1\}}a\left(\frac{x}{\e}\right)\frac{|\langle z + \nabla{\varphi}\left(\frac{x}{\e}\right),x-y\rangle+\frac{1}{\e}R(|x-y|)\rvert^2}{\lvert x-y\rvert^{d+2s}}dxdy,
\end{eqnarray*} 
where the function $R(|x-y|)$ is the remainder in Lagrange form of $\varphi$ and it is such that
\be \label{eq:taylorphi}
R(|x-y|)\leq d^2||\nabla^2\varphi||_{\rm \infty}|x-y|^2.
\ee

By the inequality $|a+b|^2 \leq (1+\eta)a^2+(1+1/\eta)b^2$, with $\eta>0$, and by the change of variables $\xi:=y-x$, we deduce
\begin{eqnarray} \label{eq:goodternaffinered}
F_\e(u_\e)+o_\e(1) &\leq & (1+\eta)(1-s)\int_\Omega \int_{B_1(0)} a\left(\frac{x}{\e}\right)\frac{|\langle z + \nabla\varphi \left(\frac{x}{\e}\right), \xi \rangle|^2}{|\xi|^{d+2s}} d\xi dx \\ \label{eq:vanishingtermaffinered}
& & +\left(1+\frac{1}{\eta}\right) \frac{1-s}{\e ^2}\int_\Omega \int_{B_1(0)} a\left(\frac{x}{\e}\right) \frac{ R(|\xi|)^2}{|\xi|^{d+2s}}d\xi dx.  
\end{eqnarray}
For a fixed $\eta>0$, the term in \eqref{eq:vanishingtermaffinered} vanishes as $\e$ tends to zero. Indeed, by assumption, the term $\frac{1-s}{\e^2}$ vanishes as $\e\to0$, and using \eqref{eq:bounds} we have that
 \begin{eqnarray}     \nonumber
\int_\Omega\int_{B_1(0)} a\left(\frac{x}{\e}\right) \frac{ R(|\xi|)^2}{|\xi|^{d+2s}}d\xi dx & \leq & d^4\beta|\Om|||\nabla^2\varphi||_{\rm \infty} \int_{B_1(0)} \frac{1}{|\xi|^{d+2s-4}}d\xi \\ \label{eq:integralesolito}
 & = & d^4\sigma_{d-1}\beta|\Om|||\nabla^2\varphi||_{\rm \infty}\int_0^1\rho^{3-2s}d\rho<+\infty.
 \end{eqnarray}

As for the term in  \eqref{eq:goodternaffinered}, note that by the symmetry of the integrand, it holds
\begin{eqnarray}\nonumber 
\int_{B_1(0)}\frac{| \langle z + \nabla{\varphi}\left(\frac{x}{\e}\right), \xi\rangle |^2}{\lvert \xi\rvert^{d+2s}}d\xi &=& \int_0^1 \int_{\partial B_\rho(0)} \frac{\left|\langle z+\nabla\varphi\left(\frac{x}{\e}\right) , \nu \rangle\right|^2} {\rho^{d+2s}} d\Hd\nu d\rho \\ \nonumber
&=& \int_0^1 \frac{1}{\rho^{d+2(s-1)}}\frac{1}{d}\int_{\partial B_\rho(0)} \left|z+\nabla\varphi\left(\frac{x}{\e}\right)\right|^2 d\Hd\nu d\rho \\ \nonumber
&=& \frac{\sigma_{d-1}}{d}\left|z+\nabla\varphi\left(\frac{x}{\e}\right)\right|^2 \int_0^1 \rho^{1-2s}d\rho \\ \label{eq:integrandgoodaffinered}
&=& \frac{\sigma_{d-1}}{2d}\left|z+\nabla\varphi\left(\frac{x}{\e}\right)\right|^2 \frac{1}{(1-s)}.
\end{eqnarray}
 Recalling that $\varphi$ and $a$ are $1$-periodic, we may substitute \eqref{eq:integrandgoodaffinered} in \eqref{eq:goodternaffinered} to get
\begin{eqnarray}\nonumber 
&& \hskip-3cm (1+\eta)(1-s)\int_\Omega \int_{B_1(0)} a\left(\frac{x}{\e}\right)\frac{|\langle z + \nabla\varphi \left(\frac{x}{\e}\right), \xi \rangle|^2}{|\xi|^{d+2s}} d\xi dx 
\\ \nonumber
&=&(1+\eta)\frac{\sigma_{d-1}}{2d}\int_\Omega a\left(\frac{x}{\e}\right)\left\lvert\nabla{\varphi}\left(\frac{x}{\e}\right)+z\right\rvert^2dx \\ \nonumber
&\leq& (1+\eta)\frac{\sigma_{d-1}}{2d}\frac{|\Om|}{\e^d}\int_{(0,\e)^d}a\left(\frac{x}{\e}\right)\left|\nabla\varphi\left(\frac{x}{\e}\right)+z\right|^2dx\\ \label{estimateperiod}
&=& (1+\eta)\frac{\sigma_{d-1}}{2d}\lvert\Om\rvert\int_{(0,1)^d} a(y)\left\lvert\nabla{\varphi}\left(y\right)+z\right\rvert^2dy,
\end{eqnarray}
which, in view of \eqref{eq:phidelta}, implies
\be\nonumber
\limsup_{\e\to 0}F_\e(u_\e)\leq(1+\eta)\left(\frac{\sigma_{d-1}}{2d}\int_\Omega\langle A_{\rm hom}z,z\rangle dx+\delta\right),
\ee
 and we conclude by letting $\eta$ tend to $0$.

\bigskip

\textit{Step $2$}. We now suppose that the function $u$ be piecewise-affine in $\Om$. More precisely, we suppose that there exists a finite family of $d$-simplices $(\Delta)_{i\in I}$ covering $\Omega$, vectors $(z_i)_{i\in I}\subseteq \Rd$, and constants $(m_i)_{i\in I}\subseteq \mathbb{R}$ such that $u(x)=\langle x,z_i\rangle+m_i$ for every $x\in \Delta_i\cap\Om$ and for every $i\in I$. To simplify the notation, from now on we will always write $\Delta_i$ in place of $\Delta_i\cap\Omega$.

Similarly to what we have done in the previous case, for a fixed $\delta>0$ and for every $i\in I$, we consider $\varphi_i\in C^\infty_c(\Rd)$ 
a $1$-periodic function satisfying 
\be\label{eq:deltacardinalita}
\int_{(0,1)^d}a(y)\lvert\nabla\varphi_i(y)+z_i\rvert^2dy<\langle A_{\rm hom}z_i,z_i\rangle+\frac{\delta}{\#I}.
\ee
Having in mind the construction of the recovery sequence performed in the previous case, we aim at constructing an approximate recovery sequence for $u$ by perturbing, in a suitable way, $u$ with the functions $\varphi_i$. To this end,
for every $\Delta_i$, we consider the subset defined by
\begin{equation*}
\Omi:=\left\{ x\in\Om:\,\,\text{dist}(x,\Delta_i^c)>\e\right\}.
\end{equation*}

For fixed $\e>0$ and $i\in I$, we consider a positive function $\psi^i_\e\in C^\infty_c(\mathbb{R}^d)$, satisfying the following conditions
\begin{subequations}
  \begin{empheq}[left=\empheqlbrace]{align}
 \nonumber &\ \psi^i_\e(x)=0\qquad\qquad \text{for } x\notin \Omi\\
 \nonumber &\ \psi^i_\e(x)=1\qquad\qquad \text{for } x\in \Omid\\
  \label{eq:psibound}  &\ 0\leq\psi_\e^i(x)\leq 1 \qquad\,  \text{for }x\in \mathbb{R}^d\\  \label{eq:gradientbound}&\ \|\nabla\psi^i_\e\|_\infty\leq\frac{2}{\e}& \\
\label{eq:hessianbound} &\ \|\nabla^2\,\psi^\textit{i}_\e\|_\infty\leq\frac{4}{\e^2}& \,,
  \end{empheq}
\end{subequations}
where $\nabla^2\psi_\e^i$ is the Hessian matrix of $\psi_\e^i$. 

We claim that an approximate recovery sequence for $u$ is given by 
\begin{equation*}
u_\e(x)=\sum_{i\in I}\left(\langle z_i,x\rangle+\e\varphi_i\left(\frac{x}{\e}\right)\psi_\e^i(x)+m_i\right)\chi_{\Delta_i}(x),
\end{equation*}
where $\chi_{\Delta_i}$ denotes the characteristic function of $\Delta_i$. Note that $u_\e$ is a continuous function, since it coincides with $u$ on the boundary of each $\Delta_i$.

Let $A:={\Om\times\Om\cap\{\lvert x-y\rvert\leq \e\}}$. Thanks to Lemma \ref{lemma:locality} and Remark \ref{re:remark}, it is enough to show that 
\begin{equation} \nonumber 
\limsup_{\e\to 0}(1-s)\iint_A a\left(\frac{x}{\e}\right)\frac{\lvert u_\e(x)-u_\e(y)\rvert^2}{\lvert x-y\rvert^{d+2s}}dxdy\leq \frac{\sigma_{d-1}}{2d}\int_{\Om} \langle A_{\rm hom}\nabla{u},\nabla{u}\rangle dx+\delta.
\end{equation}
To make the computations more manageable, we consider a suitable partition of $A$. For $i\in I$, we set
\begin{equation*}
B_i =\Omid\times\Omid\cap\{\lvert x-y\rvert \leq \e\},
\end{equation*}
\begin{equation*}
C_i = \Delta_i\times\Delta_i\cap\{\lvert x-y\rvert\leq \e \text{ and } x \text{ or }y\in \Delta_i\setminus{\Omid}\},
\end{equation*} 
and for $j \in I,\, j \neq i$, we set
\[
D_{ij}=\Delta_i\times\Delta_j\cap\{\lvert x-y\rvert \leq \e\},
\]
so that $A=\bigcup_{i\in I}(B_i\cup C_i)\cup \bigcup_{i\neq j}D_{ij}$.

Taking advantage of this decomposition and of the subadditivity of the $\limsup$, we have
\begin{eqnarray}
&& \nonumber\hskip-2cm \limsup_{\e\to 0}\, (1-s)\iint_A a\left(\frac{x}{\e}\right)\frac{\lvert u_\e(x)-u_\e(y)\rvert^2}{\lvert x-y\rvert^{d+2s}}dxdy\\
&&\label{eq:FunctionalonBi} \leq  \limsup_{\e\to 0}\sum_{i\in I}(1-s)\iint_{B_i} a\left(\frac{x}{\e}\right)\frac{\lvert u_\e(x)-u_\e(y)\rvert^2}{\lvert x-y\rvert^{d+2s}}dxdy\\
&&\label{eq:FunctionalonCi} +\sum_{i\in I}\limsup_{\e\to 0}\,(1-s)\iint_{C_i} a\left(\frac{x}{\e}\right)\frac{\lvert u_\e(x)-u_\e(y)\rvert^2}{\lvert x-y\rvert^{d+2s}}dxdy\\ 
&& \label{eq:FunctionalonDij} +\sum_{i\in I}\sum_{j\in I,\, j\neq i}\limsup_{\e\to 0}\,(1-s)\iint_{D_{ij}} a\left(\frac{x}{\e}\right)\frac{\lvert u_\e(x)-u_\e(y)\rvert^2}{\lvert x-y\rvert^{d+2s}}dxdy.
\end{eqnarray}

We begin by estimating \eqref{eq:FunctionalonBi}. Since for fixed $i\in I$ we have $\psi_\e^i=1$ on $B_i$, by the first-order Taylor expansion of $\varphi_i$, for a fixed $\eta>0$ we get
\begin{eqnarray}\nonumber
   && \hskip-2.5cm  \sum_{i\in I}(1-s)\iint_{B_i}a\left(\frac{x}{\e}\right)\frac{|\langle z_i + \nabla{\varphi}_i\left(\frac{x}{\e}\right),x-y\rangle+\frac{1}{\e}R_i(|x-y|)\rvert^2}{\lvert x-y\rvert^{d+2s}}dxdy\\ 
   &\leq&\label{eq:nonvanishingeta}  \sum_{i\in I}(1+\eta)(1-s)\int_{\Omid}\int_{B_\e(0)} a\left(\frac{x}{\e}\right)\frac{|\langle z_i + \nabla\varphi_i \left(\frac{x}{\e}\right), \xi \rangle|^2}{|\xi|^{d+2s}} d\xi dx
  \\&+&\label{eq:vanishingeta} \sum_{i\in I}\left(1+\frac{1}{\eta}\right) \frac{1-s}{\e ^2}\int_{\Omid} \int_{B_\e(0)} a\left(\frac{x}{\e}\right) \frac{R_i(|\xi|)^2}{|\xi|^{d+2s}}d\xi dx,
\end{eqnarray}
 where $R_i$ is the remainder in Lagrange form of the first-order Taylor expansion of $\varphi_i$, which satisfies   
\be\label{eq:rest}
R_i(|x-y|)\leq d^2 \max_{i\in I}||\nabla^2\,\varphi_i||_{\infty}|x-y|^2=:M|x-y|^2.
\ee
Thus, we may estimate  \eqref{eq:vanishingeta} with 
\be \nonumber
\beta M^2\sum_{i\in I}\left(1+\frac{1}{\eta}\right) \frac{1-s}{\e ^2}\int_{\Delta_i} \int_{B_1(0)} \frac{|\xi|^4}{|\xi|^{d+2s}}d\xi dx,
\ee
which vanishes for $\e \to 0$ in view of the fact that $\frac{1-s}{\e^2}$ goes to zero for $\e\to0$ and of \eqref{eq:integralesolito} (with $\Omega$ replaced by $\Delta_i$).

As for \eqref{eq:nonvanishingeta}, considering larger domains
and taking advantage of the computations leading to \eqref{estimateperiod}, we see that 
\begin{eqnarray*}
    &&\hspace{-2 cm}\displaystyle \sum_{i\in I}(1+\eta)(1-s)\int_{\Omid}\int_{B_\e(0)} a\left(\frac{x}{\e}\right)\frac{|\langle z_i + \nabla\varphi_i \left(\frac{x}{\e}\right), \xi \rangle|^2}{|\xi|^{d+2s}} d\xi dx\\
    &\leq&  \sum_{i\in I}(1+\eta)(1-s)\int_{\Delta_i}\int_{B_1(0)} a\left(\frac{x}{\e}\right)\frac{|\langle z_i + \nabla\varphi_i \left(\frac{x}{\e}\right), \xi \rangle|^2}{|\xi|^{d+2s}} d\xi dx;\\
    &\leq& (1+\eta)\frac{\sigma_{d-1}}{2d}\sum_{i\in I}\lvert\Delta_i\rvert\int_{(0,1)^d} a(y)\left\lvert\nabla{\varphi_i}\left(y\right)+z_i\right\rvert^2dy.
\end{eqnarray*}
Recalling \eqref{eq:deltacardinalita}, we finally conclude
\begin{eqnarray*}
\nonumber
&&\hspace{-3cm}\limsup_{\e \to 0}\sum_{i=1}^n(1+\eta)(1-s)\iint_{B_i}a\left(\frac{x}{\e}\right)\frac{\lvert u_\e(x)-u_\e(y)\rvert^2}{\lvert x-y\rvert^{d+2s}}\\&\leq&\limsup_{\e \to 0}\,(1+\eta)\frac{\sigma_{d-1}}{2d}\sum_{i\in I}\Bigl(\int_{\Delta_i}\langle A_{\rm hom}z_i,z_i\rangle+\frac{\delta}{\#I}\Bigr)\\
&\leq&(1+\eta)\frac{\sigma_{d-1}}{2d}\hspace{-0.2cm}\int_{\Om} \langle A_{\rm hom}\nabla{u},\nabla{u}\rangle dx+\delta,
\end{eqnarray*}
so that letting $\eta \to 0$,  \eqref{eq:FunctionalonBi} is estimated.

We now show that \eqref{eq:FunctionalonCi} vanishes for $\e \to 0$. As $\#I$ is finite and independent on $\e$, it suffices to take into account a single term in the sum.
By  the convexity of $x\mapsto x^2$, we have 
\begin{eqnarray}\nonumber
&&\hspace{-2 cm} (1-s)\iint_{C_i}a\left(\frac{x}{\e}\right)\frac{\lvert u_\e(x)-u_\e(y)\rvert^2}{\lvert x-y\rvert^{d+2s}}dxdy\\ \label{eq:convex1}
&\leq& 3\beta(1-s)\iint_{C_i}\frac{ \lvert z_i\rvert^2\lvert x-y\rvert^2}{\lvert x-y\rvert^{d+2s}}dxdy \\ 
\label{eq:convex2}
  &&  + 3\beta(1-s)\iint_{C_i}\frac{\left\lvert\e\psi^i_\e(y)\left(\varphi_i\left(\frac{x}{\e}\right)-\varphi_i\left(\frac{y}{\e}\right)\right)\right\rvert^2}{\lvert x-y\rvert^{d+2s}}dxdy\\ \label{eq:convex3}
 && +3\beta(1-s)\iint_{C_i}\frac{\left\lvert\e\varphi_i\left(\frac{x}{\e}\right)(\psi^i_\e(x)-\psi^i_\e(y))\right\rvert^2}{\lvert x-y\rvert^{d+2s}}dxdy.
\end{eqnarray}

We start with \eqref{eq:convex1}. Via a change of variables, we see that  
\begin{eqnarray*}\nonumber
  (1-s)\iint_{C_i}\frac{ \lvert z_i\rvert^2\lvert x-y\rvert^2}{\lvert x-y\rvert^{d+2s}}dxdy &\leq& (1-s)\lvert z_i\rvert^2\int_{\Delta_i\setminus{\Omid}}\int_{B_{1}(0)}\frac{\lvert\xi\rvert^2}{\lvert\xi\rvert^{d+2s}}d\xi dx\\
   &\leq&
    (1-s)\rvert\lvert z_i\rvert^2\lvert\Delta_i\setminus{\Omid}\rvert \sigma_{d-1}\int_{B_{1}(0)}\frac{\lvert\xi\rvert^2}{\lvert\xi\rvert^{d+2s}}d\xi \label{eq:radialintegral}\\&=&\frac{\sigma_{d-1}}{2}\lvert z_i\rvert^2\lvert\Delta_i\setminus{\Omid}\rvert,
\end{eqnarray*}
which converges to zero as $\e\to 0$. 

As for \eqref{eq:convex2}, using the first-order Taylor expansions of $\varphi_i$, \eqref{eq:psibound}, \eqref{eq:gradientbound}, and recalling \eqref{eq:rest}, one gets
\begin{eqnarray*}\nonumber
&&\hspace{-0.7cm}(1-s)\iint_{C_i}\frac{\left\lvert\e\psi^i_\e(y)\left(\varphi_i\left(\frac{x}{\e}\right)-\varphi_i\left(\frac{y}{\e}\right)\right)\right\rvert^2}{\lvert x-y\rvert^{d+2s}}dxdy\\ 
&\leq& (1-s)\int_{\Delta_i\setminus{\Omid}}\int_{B_{\e}(0)}\frac{\left\lvert\langle\nabla{\varphi_i}\left(\frac{x}{\e}\right),\xi\rangle+\frac{1}{\e}R_i(|\xi|)\right\rvert^2}{\lvert \xi\rvert^{d+2s}}d\xi dx \\ \label{eq:sommaconO}\hspace{-0.7cm}&\leq&
2(1-s)\Bigl(\int_{\Delta_i\setminus{\Omid}}\int_{B_{\e}(0)}\frac{\left\lvert \langle\nabla{\varphi_i}\left(\frac{x}{\e}\right),\xi\rangle\right\rvert^2}{\lvert \xi\rvert^{d+2s}}d\xi dx+\frac{M^2}{\e^2}\int_{\Delta_i\setminus{\Omid}}\int_{B_{\e}(0)}\frac{\lvert\xi\rvert^4}{\lvert \xi\rvert^{d+2s}}d\xi dx\Bigr).
\end{eqnarray*}
The first term can be estimated with 
\begin{eqnarray*}
 &&\hskip-2cm(1-s)\int_{\Delta_i\setminus{\Omid}}\int_{B_{\e}(0)}\frac{\lvert\langle\nabla{\varphi_i}\left(\frac{x}{\e}\right),\xi\rangle\rvert^2}{\lvert \xi\rvert^{d+2s}}d\xi dx\\  \nonumber
&\leq& (1-s)\lvert\lvert \nabla{\varphi_i}\rvert\rvert_\infty^2\int_{\Delta_i\setminus{\Omid}}\int_{B_{\e}(0)}\frac{\lvert\xi\rvert^2}{\lvert \xi\rvert^{d+2s}}d\xi dx\\ \nonumber
&=& (1-s)\lvert\lvert \nabla{\varphi_i}\rvert\rvert_\infty^2\lvert\Delta_i\setminus\Omid\rvert\int_{B_1(0)}\frac{|\xi|^2}{|\xi|^{d+2s}} \\ \nonumber 
&\leq& \frac{\sigma_{d-1}}{2}\lvert\lvert\nabla{\varphi_i}\rvert\rvert_\infty^2|\Delta_i\setminus\Omid|\,,
\end{eqnarray*}
while for the second term we have 
\begin{eqnarray}\label{eq:radialcomputation}
     \frac{1-s}{\e^2}\int_{\Delta_i\setminus{\Omid}}\int_{B_{\e}(0)}\frac{\lvert\xi\rvert^4 }{\lvert \xi\rvert^{d+2s}}d\xi dx
     \leq   \frac{(1-s)}{\e^2}|\Delta_i\setminus\Omid|\frac{\sigma_{d-1}}{4-2s},
\end{eqnarray}
and therefore both terms vanish as $\e$ tends to $0$.

In order to estimate \eqref{eq:convex3}, we denote by $\Tilde{R}_i$ the Lagrange remainders of the first-order Taylor expansion of $\psi_\e^i$ and we note that by \eqref{eq:hessianbound} it follows that
\be
\Tilde{R}_i(|x-y|)\leq \frac{4d^2}{\e^2}|x-y|^2.
\ee

\noindent
Using the first-order Taylor expansion of $\psi^i_\e$, we conclude that
\begin{eqnarray}\nonumber&& \hspace{-2.5 cm}
    (1-s)\iint_{C_i}\frac{\left\lvert\e\varphi_i\left(\frac{x}{\e}\right)(\psi^i_\e(x)-\psi^i_\e(y))\right\rvert^2}{\lvert x-y\rvert^{d+2s}}dxdy\\\nonumber &&
    \leq(1-s)\lvert\lvert\varphi_i\rvert\rvert_\infty^2 \int_{\Delta_i\setminus{\Omid}}\int_{B_{\e}(0)}\frac{\lvert\e\langle\nabla{\psi^i_\e(x)},\xi\rangle+\e \Tilde{R}(|x-y|)\rvert^2}{\lvert\xi\rvert^{d+2s}}d\xi dx\\ \label{eq:usualC} &&
    \leq 2 (1-s)\lvert\lvert\varphi_i\rvert\rvert_\infty^2\Bigl(\int_{\Delta_i\setminus{\Omid}}\int_{B_{\e}(0)}\frac{\lvert\e\langle\nabla{\psi^i_\e(x)},\xi\rangle\rvert^2}{\lvert\xi\rvert^{d+2s}}d\xi dx\\ \label{eq:vanishC1} &&
    \qquad \qquad  \qquad \qquad \qquad  \qquad \qquad + \int_{\Delta_i\setminus{\Omid}}\int_{B_{\e}(0)}\frac{16d^4}{\e^2}\frac{\lvert\xi\rvert^4}{\lvert\xi\rvert^{d+2s}}d\xi dx\biggr).
\end{eqnarray}
In view of \eqref{eq:radialcomputation}, we infer that the term in  \eqref{eq:vanishC1} converges to zero. 

As for \eqref{eq:usualC}, we may use  \eqref{eq:gradientbound} to obtain that
\begin{eqnarray*}&&\nonumber
\hspace{-1 cm}(1-s)\int_{\Delta_i\setminus{\Omid}}\int_{B_{\e}(0)}\frac{\lvert\e\langle\nabla{\psi^i_\e(x)},\xi\rangle\rvert^2}{\lvert\xi\rvert^{d+2s}}d\xi dx\\
&&  \leq 2 (1-s)|\Delta_i\setminus{\Omid}|\int_{B_1(0)}\frac{|\xi|^2}{|\xi|^{d+2s}}d\xi = \sigma_{d-1}|\Delta_i\setminus{\Omid}|,
\end{eqnarray*}
so that we have finally shown that \eqref{eq:FunctionalonCi} tends to $0$ for $\e \to 0$.

\smallskip

We conclude by proving that \eqref{eq:FunctionalonDij} vanishes for $\e\to 0$. Also in this case, since $\#I$ is finite and independent of $\e$, it suffices to take into account just one summand.

Note that if $(x,y)\in D_{ij}$, then $x\in\Delta_i\setminus{\Omi}$ and $y\in\Delta_j\setminus\Delta_j^{\e}$, so that $\psi^i_\e(x)=\psi^j_\e(y)=0.$ 

\noindent Thus, taking advantage of the Lipschitz continuity of $u$, we get
\begin{eqnarray*} \nonumber
    \hspace{-0.5 cm}(1-s)\iint_{D_{ij}}a\left(\frac{x}{\e}\right)\frac{\lvert u_\e(x)-u_\e(y)\rvert^2}{\lvert x-y\rvert^{d+2s}} &\leq& \beta(1-s)\int_{\Delta_i\setminus{\Omi}}\int_{B_{\e}(0)}\frac{\lvert u(x+\xi)-u(x)\rvert^2}{\lvert\xi\rvert^{d+2s}}d\xi dx \\
 &\leq& \beta\lvert\lvert\nabla{u}\rvert\rvert_\infty(1-s)\int_{\Delta_i\setminus{\Omi}}\int_{B_{1}(0)}\frac{\lvert\xi\rvert^2}{\lvert\xi\rvert^{d+2s}}d\xi dx\\
 &=& \beta\lvert\lvert\nabla{u}\rvert\rvert_\infty(1-s) |\Delta_i\setminus{\Omi}| \sigma_{d-1} \frac{1}{2-2s} \\
 &\leq& \beta\lvert\lvert\nabla{u}\rvert\rvert_\infty\frac{\sigma_{d-1}}{2} |\Delta_i\setminus{\Omi}|
\end{eqnarray*}
and this last term vanishes for $\e \to 0$ deducing that \eqref{eq:FunctionalonDij} tends to $0$.

\smallskip

\textit{Step $3$.} The functional $F_{\rm\hom}$ defined in \eqref{mainthm}, which has been proven to be the desired $\Gamma$-limit on piecewise-affine functions, is continuous in the strong topology of $W^{1,2}(\Omega)$. Since $\Omega$ is an open bounded set with Lipschitz boundary, 
every function $u\in W^{1,2}(\Omega)$ may be extended to $\overline{u}\in W^{1,2}_0(B_R)$ via the standard extension operator and a cut-off function.
It is a known fact that a dense class in $W^{1,2}_0(B_R)$ is the class of piecewise-affine functions, namely, functions which are piecewise-affine on a finite family of simplices covering $\overline{B}_R$ (see \cite[Proposition 2.1]{EkelandConvex}). Therefore, the class of piecewise-affine functions discussed in Step $2$ is dense in $W^{1,2}_0(B_R)$ and a fortiori in $W^{1,2}(\Omega)$. Having proved that the $\Gamma$-$\limsup$ inequality holds on a dense subset of $W^{1,2}(\Omega)$, we conclude that it holds for any $u\in W^{1,2}(\Omega)$ (see for instance \cite[Remark 2.8]{handbook}).

\medskip
\noindent \textbf{Acknowledgements.}
 This paper is based on work supported by the National Research Project (PRIN  2017BTM7SN) 
 ``Variational Methods for Stationary and Evolution Problems with Singularities and 
 Interfaces", funded by the Italian Ministry of University and Research. 
Andrea Braides and Davide Donati are members of GNAMPA, INdAM.

\bibliographystyle{plain}
\bibliography{References}
\end{document}